\setlist[enumerate,1]{label={(\roman{enumi})}, leftmargin=*}
\theoremstyle{definition}
\newtheorem{thm}{Theorem}[section]
\newtheorem{coroll}[thm]{Corollary}
\newtheorem{defn}[thm]{Definition}
\newtheorem{lemma}[thm]{Lemma}
\newtheorem{example}[thm]{Example}
\newtheorem{prop}[thm]{Proposition}
\newtheorem{remark}[thm]{Remark}
\newtheorem*{thm*}{Main Theorem}
\newtheorem{notn}[thm]{Notation}
\newcommand{\cA}{\mathcal{A}}
\newcommand{\cC}{\mathcal{C}}
\newcommand{\cM}{\mathcal{M}}
\newcommand{\cO}{\mathcal{O}}
\newcommand{\cQ}{\mathcal{Q}}
\newcommand{\cU}{\mathcal{U}}
\newcommand{\cL}{\mathcal{L}}
\newcommand{\bN}{\mathbb{N}}
\newcommand{\bZ}{\mathbb{Z}}
\newcommand{\Comod}{\mathrm{Comod}}
\newcommand{\fdcomod}{\mathrm{fdcomod}}
\newcommand{\fdmod}{\mathrm{fdmod}}
\newcommand{\reff}{^{\mathrm{eff}}}
\newcommand{\rfp}{^{\mathrm{fp}}}
\newcommand{\rop}{^{\mathrm{op}}}
\newcommand{\Vect}{\mathrm{Vect}}
\newcommand\blank{\underline{\ \ }}
\newcommand{\on}{\operatorname}
\newcommand{\Alg}{\on{Alg}}
\DeclareMathOperator{\colim}{colim}
\newcommand{\dash}{\text{-}}
\DeclareMathOperator{\Geff}{G\reff}
\DeclareMathOperator{\GL}{GL}
\DeclareMathOperator{\gr}{gr}
\DeclareMathOperator{\Gr}{Gr}
\DeclareMathOperator{\Gzero}{G_0}
\newcommand{\End}{\on{End}}
\newcommand{\Ext}{\on{Ext}}
\newcommand{\Hom}{\on{Hom}}
\DeclareMathOperator{\Ind}{Ind}
\DeclareMathOperator{\Keff}{K\reff}
\renewcommand{\ker}{\on{ker}}
\DeclareMathOperator{\Kzero}{K_0}
\DeclareMathOperator{\Mod}{Mod}
\DeclareMathOperator{\Perf}{Perf}
\newcommand{\Quot}{\on{Quot}}
\DeclareMathOperator{\rep}{rep}
\DeclareMathOperator{\RHom}{RHom}
\newcommand{\Spec}{\on{Spec}}
\DeclareMathOperator{\wt}{wt}
\newcommand{\Gm}{\mathbb{G}_{\mathrm{m}}}
\tikzset{
  symbol/.style={
    draw=none,
    every to/.append style={
      edge node={node [sloped, allow upside down, auto=false]{$#1$}}}
  }
}
\begin{document}
\title{\textbf{Moduli of objects in finite length abelian categories}}
\author{Andres Fernandez Herrero, Emmett Lennen, and Svetlana Makarova}
\date{}

\maketitle
\abstract{We construct moduli spaces of objects in an abelian category satisfying some finiteness hypotheses. Our approach is based on the work of Artin-Zhang \cite{artin-zhang} and the intrinsic construction of moduli spaces for stacks developed by Alper-Halpern-Leistner-Heinloth \cite{alper2019existence}.}
\tableofcontents

%---
\section{Introduction}
%---
Moduli problems in algebraic geometry can roughly be divided into two classes: ``linear'' and ``nonlinear''.
Linear moduli problems classify objects that can be broken down to linear algebraic data, like representations, sheaves and complexes.
Nonlinear moduli problems often classify objects of more geometric nature, like certain classes of varieties, commutative or noncommutative.
In \cite{artin-zhang}, Artin and Zhang provide a framework to study the moduli of objects in a certain class of abelian categories, and they use it to construct Hilbert schemes of quotients of a noetherian object.
We view it as a general construction in the realm of linear moduli problems. The goal of this paper is to study the range of applicability of their theory in the context of finite length abelian categories.

We construct moduli spaces of objects in an abelian category $\cC$ satisfying the following conditions,
see \cref{section: abcats definitions} for definitions.
\begin{defn}
\label{standing assumptions on C}
    Let $k$ be a field. An essentially small $k$-linear abelian category $\cC$ is called \textit{nearly finite} if it satisfies the following:
    \begin{enumerate}[label=(A\arabic*), start=0]
        \item \label{standing assumption: C Hom-finite}
        is \emph{Hom-finite}, that is, Hom-spaces are finite dimensional;
        \item \label{standing assumption: C finite}
        is \emph{finite length}, that is, every object has finite length;
        \item \label{standing assumption: C has enough projectives}
        has enough projective objects.
    \end{enumerate}
\end{defn}

Our main results can be summarized as follows.
\begin{thm*}[\Cref{thm: good moduli space whole stack}, \Cref{thm: theta stratification and moduli space stability condition}] 
Let $\cC$ be a nearly finite category (\cref{standing assumptions on C}) and let $\cM_\cC$ be the stack of objects in $\cC$ as in \cref{definition: stack of objects in C}. Then,
\begin{enumerate}
    \item $\cM_\cC$ is a disjoint union of algebraic stacks of finite type over $k$.
    \item $\cM_\cC$ admits a good moduli space $M_\cC$ which is a disjoint union of local artinian $k$-schemes.
    \item K-theory classes naturally give rise to a notion of slope stability, which induces a $\Theta$-stratification on each connected component of $\cM_\cC$. The corresponding semistable locus admits a projective good moduli space.
\end{enumerate}
\end{thm*}

To illustrate the main theorem above, we point out that it refines King's construction of quiver moduli spaces when the quiver is acyclic or when the path algebra is finite-dimensional, which we now recall \cite{king}.
In that case the connected components of $\cM_{\cC}$ are in bijection with dimension vectors for the representations (more generally, in our setting we show that the connected components of $\cM_{\cC}$ are in natural bijection with effective G-theory classes in $\Geff(\cC)$ as in \Cref{subsection: K-theory and G-theory}). The K-theory classes are in natural correspondence with tuples of integers indexed by the vertices of the quiver. For each such tuple, King defined a notion of semistability for representations of quivers, and showed that the semistable locus admits a projective good moduli space over $k$.
% We notice that King works with the reduced structure, while we recover the full scheme structure.

Our proof of the main theorem uses the intrinsic criteria for the existence of moduli spaces developed in \cite{alper-good-moduli, alper2019existence}. Moreover, we follow the treatment of the moduli of abelian categories in \cite{alper2019existence} using the functor by Artin-Zhang. The main contribution of this paper is the observation that the finiteness conditions that we impose on our abelian categories guarantee that the moduli functor $\cM_{\cC}$ is represented by an algebraic stack which admits a good moduli space. 

% A novelty in our approach is the use of ind-algebraic spaces to prove the existence of the good moduli space before knowing quasicompactness of the connected components of the moduli stack, and then concluding a posteriori that each connected component is quasicompact. We hope that this type of technique could apply to other stacks where quasicompactness is known only for the irreducible components (\Cref{remark: ahlh strengthening}).

In \Cref{section: example}, we provide several examples of abelian categories satisfying \Cref{standing assumptions on C}. 
In particular, our results provide new moduli spaces -- moduli of comodules over Hopf algebras of functions on quantum groups, which, to our knowledge, have not been constructed before, yet may be of interest to the representation theory community.

Even though our assumptions on the category $\cC$ seem quite restrictive, we believe that they are not very far from being sharp for the existence of a moduli space for the whole moduli functor $\cM_{\cC}$ in the sense of Artin-Zhang (before imposing any stability conditions). Indeed, Hom-finiteness \ref{standing assumption: C Hom-finite} is necessary in order for the functor to be represented by an algebraic stack. The finite length condition in \ref{standing assumption: C finite} is necessary for the functor $\cM_{\cC}$ to admit a good moduli space of finite type over $k$ by \cite[Lem. 7.19]{alper2019existence}. Finally, the assumption on the existence of enough projective objects \ref{standing assumption: C has enough projectives} is to our knowledge the only general categorical condition that guarantees that $\Ind(\cC)$ is adically complete as in \cite{artin-zhang}, which is a necessary condition for $\cM_{\cC}$ to be represented by an algebraic stack.

\subsection*{Related works}

King \cite{king} applies the methods of GIT to construct moduli of finite-dimensional representations of finite-dimensional associative algebras. These categories are known to have a categorical description, namely, they are nearly finite in the sense of \cref{standing assumptions on C} and have a finite number of simple objects up to isomorphism. 
Further, various stability conditions have been described on a subclass of those in \cite{futorny2007moduli}. 

Artin and Zhang \cite{artin-zhang} develop methods for defining families of objects in more general abelian categories, and prove that the Hilbert functor of quotients is representable by an algebraic space. This is a striking general result that they apply to constructing moduli spaces for noncommutative graded algebras. However, it relies on the assumption that the category is adically complete. This is guaranteed by the assumption that the category has enough projective objects, and it can also be checked directly for categories of coherent sheaves on projective schemes (commutative or noncommutative).

In \cite[Section 7]{alper2019existence}, the authors apply the theory of good moduli spaces to stacks of objects as defined in \cite{artin-zhang}. With a few technical assumptions, they prove existence of a proper good moduli space of Bridgeland semistable objects on the derived category of a proper smooth variety. Some of their more general results \cite[Thm. 7.27]{alper2019existence} assume as an input that the moduli stack is algebraic and locally of finite type.
Some technical work in \Cref{section: the stack of objects} is dedicated to verifying these assumptions for an abstract abelian category.

There have been constructions of moduli of objects in triangulated and dg categories. 
For example, To\"en and Vaqui\'e \cite{ToenVaquie-objects-in-dg-cats} construct derived moduli of perfect objects in dg categories, and Lieblich \cite{Lieblich-complexes-on-proper-morphism} constructs classical moduli of perfect universally gluable complexes on a proper morphism. While their constructions work in vast generality, their moduli stacks only see perfect objects. On the other hand, in our work, we also construct moduli of objects that are not necessarily perfect (for example, the moduli of modules over a finite $k$-algebra of infinite global dimension, such as $k[\epsilon]/(\epsilon^2)$).

\subsection*{Limitations}

Our framework leaves out interesting examples such as the moduli of coherent sheaves on a proper scheme or the moduli of objects in the heart of a Bridgeland stability condition. In these cases the whole moduli functor does not necessarily admit a good moduli space, but instead there is a semistability condition such that the semistable locus admits a good moduli space. 

The requirement that the category $\cC$ has enough projectives seems too restrictive to address some of these classical moduli problems (see \cite[Thm. 1.1]{Kanda} for a criterion of having enough projectives). Unfortunately, applying the theory of Artin and Zhang directly to arbitrary abelian categories without enough projectives does not usually yield functors represented by algebraic stacks (one can see this, for example, when we take $\cC$ to be the category of finite length modules over the polynomial algebra $k[t]$). It seems natural to attempt to modify the moduli functor, or to consider a natural embedding of the abelian category into one where the Artin-Zhang formalism is better behaved. This is part of ongoing investigations by the authors.

\subsection*{Vistas}

We view this project as the first step in our attempt to understand strange duality \cite{DT94, Beauville95, derksen-weyman,marian-oprea-strange} in the context of abelian categories. Exploring the strange duality phenomena for nearly finite categories will be part of our future research. 

A possible avenue of exploration for the interested reader would be to understand concretely the wall-crossing behavior of the stability conditions that we provide. It should be possible to use the $\Theta$-stratification and the linearity of the moduli problem to keep track of the variation of the moduli spaces in some of the examples in \Cref{section: example}. Furthermore, we have not attempted to investigate further the geometry of the moduli spaces that can be constructed using our theory, so this is another natural direction to investigate.

\subsection*{Notation}
Unless otherwise stated, all of our rings are assumed to be associative and unital, but not necessarily commutative. We shall work over a fixed algebraically closed ground field $k$. All of the rings, schemes and stacks are implicitly equipped with a morphism to $\Spec k$. For any two schemes (or stacks) $X$, $Y$ we write $X \times Y$ to denote the fiber product over $\Spec k$, any unadorned tensor product means tensoring over $k$, i.e., $\otimes = \otimes_k$. We denote by $k\dash\Alg$ the category of commutative $k$-algebras.

We will work with a fixed category $\cC$ that is nearly finite in the sense of \cref{standing assumptions on C}. Denote by $\cA = \Ind (\cC)$ the ind-completion of $\cC$. We will write $V$, $W$ for objects in abelian categories; $P$ and $S$ will usually denote projective and simple objects, respectively. We reserve letters $R$, $T$ to talk about $k$-algebras and $M$, $N$ for modules over them.

\subsection*{Acknowledgements}
We would like to thank Pieter Belmans, Pavel Etingof, Dan Halpern-Leistner, Andr\'es Ib\'a\~nez N\'u\~nez, Kimoi Kemboi, Andrew Kwon, Tony Pantev, Maximilien P\'eroux, Alekos Robotis and David Rydh for their interest in the project and helpful discussions.
The authors also thank the anonymous referees for their thoughtful comments and attention to detail. In particular, \Cref{proposition: MCalpha is connected and qc} (which greatly simplifies our original argument) and \Cref{remark 4.5} were suggested by a referee.

% ---
\section{Abelian categories and base-change}
\label{section: abcats definitions}
% ---

%
\subsection{Nearly finite categories}

A $k$-linear abelian category $\cC$ is \emph{Hom-finite} if for every $V, W \in \cC$, the dimension of $\Hom_\cC(V,W)$ over the base field $k$ is finite.
An object $V \in \cC$ is \emph{simple} if it has exactly two subobjects: $0$ and $V$.
The \emph{length} of an object $V \in \cC$ is the supremum of the numbers $n$ for which there exists a filtration
\[
    0=V_0 \subsetneq V_1 \subsetneq \dots \subsetneq V_n = V,
\]
possibly infinite.
Such a filtration is called a \emph{Jordan-H\"older filtration} if each factor $V_i / V_{i-1}$ is simple for $1\leq i \leq n$, and in this case we set $\gr V = \bigoplus V_i / V_{i-1}$. 

\begin{remark}
\label{remark: finite length iff noeth and art}
\label{remark: gr V is indep of JH filtrations}
An object has finite length if and only if it is both noetherian and artinian \cite[\href{https://stacks.math.columbia.edu/tag/0FCJ}{0FCJ}]{stacks-project}, and in this case $\gr V$ is independent, up to isomorphism, of the choice of the Jordan-H\"older filtration \cite[\href{https://stacks.math.columbia.edu/tag/0FCK}{0FCK}]{stacks-project}.
\end{remark}

\begin{lemma} 
\label{lemma: projective covers of simples}
Let $\cC$ be a nearly finite abelian category (\cref{standing assumptions on C}).
For every simple object $S \in \cC$, there is a unique (up to isomorphism) indecomposable projective object $P_S \in \cC$ that admits an epimorphism $P_S \twoheadrightarrow S$. If $S'$ is another simple object that is not isomorphic to $S$, then $\Hom(P_S, S') =0$.
\end{lemma}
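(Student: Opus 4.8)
The plan is to develop the basic theory of projective covers of simple objects directly from the three standing assumptions, using as the main engine the fact that an indecomposable object has a \emph{local} endomorphism ring. First I would record that $\cC$ is a Krull--Schmidt category. Finiteness \ref{standing assumption: C finite} forces every object to be a finite direct sum of indecomposables, since length is additive over direct sums and hence bounds the number of summands; and any direct summand of a projective is again projective. The crucial input here is Hom-finiteness \ref{standing assumption: C Hom-finite}: for an indecomposable $V$, the algebra $\End_\cC(V)$ is a finite-dimensional $k$-algebra with no nontrivial idempotents, and such an algebra is local (its Jacobson radical is nilpotent, idempotents lift modulo it, so $\End(V)/\rad$ has no nontrivial idempotents and is therefore a division ring, whence every element of $\End(V)$ is either a unit or lies in the radical). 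I will use repeatedly the resulting \emph{local ring trick}: if $\pi\colon V \to S$ is nonzero with $V$ indecomposable and $e \in \End(V)$ satisfies $\pi \circ e = \pi$, then $e$ is an isomorphism, for otherwise $1-e$ is a unit and $\pi\circ(1-e)=0$ would force $\pi=0$.

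To produce $P_S$ I would invoke \ref{standing assumption: C has enough projectives}: choose a projective $P$ with an epimorphism $P \twoheadrightarrow S$, decompose $P=\bigoplus_i P_i$ into indecomposable (hence projective) summands, and note that since $S$ is simple at least one composite $P_i \hookrightarrow P \twoheadrightarrow S$ is surjective; set $P_S := P_i$ for such an $i$, with surjection $\pi_S\colon P_S \twoheadrightarrow S$.

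The technical heart, and the step I expect to be the main obstacle, is to show that $\ker\pi_S$ is \emph{superfluous} in $P_S$, i.e.\ that any subobject $L \subseteq P_S$ with $L + \ker\pi_S = P_S$ must equal $P_S$; this is what upgrades $\pi_S$ to a genuine projective cover and powers the remaining claims. Given such an $L$, the restriction $\pi_S|_L\colon L \twoheadrightarrow S$ is surjective, so choosing (again via enough projectives) a projective $Q$ with $Q \twoheadrightarrow L$ and composing with $L \hookrightarrow P_S$ yields $\psi\colon Q \to P_S$ with $\operatorname{im}\psi = L$ and $\pi_S\psi$ surjective. Projectivity of $P_S$ lifts $\pi_S$ along the epimorphism $\pi_S\psi$ to some $\sigma\colon P_S \to Q$ with $\pi_S\psi\sigma = \pi_S$, and the local ring trick applied to $e=\psi\sigma$ makes $\psi\sigma$ an isomorphism, so $\psi$ is a split epimorphism and $L = \operatorname{im}\psi = P_S$. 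The delicate point is precisely this passage from the numerical condition $L+\ker\pi_S=P_S$ to invertibility, which is where finiteness and Hom-finiteness both get used through the local endomorphism ring.

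With superfluousness established, both remaining assertions follow quickly. For uniqueness, given another indecomposable projective $Q$ with $\rho\colon Q \twoheadrightarrow S$, projectivity furnishes $f\colon P_S \to Q$ and $g\colon Q \to P_S$ with $\rho f = \pi_S$ and $\pi_S g = \rho$; then $\pi_S(gf)=\pi_S$, so the local ring trick makes $gf$ an isomorphism, $f$ a split monomorphism, and hence $P_S$ a direct summand of the indecomposable object $Q$, forcing $P_S \cong Q$. For the final statement, suppose $\phi\colon P_S \to S'$ were nonzero with $S'$ simple and $S' \not\cong S$; then $\phi$ is surjective and $\ker\phi$ is a maximal subobject distinct from $\ker\pi_S$ (distinct since $P_S/\ker\pi_S \cong S \not\cong S' \cong P_S/\ker\phi$), so $\ker\phi + \ker\pi_S = P_S$, and superfluousness of $\ker\pi_S$ then forces $\ker\phi = P_S$, i.e.\ $\phi=0$, a contradiction. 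Hence $\Hom(P_S,S')=0$.
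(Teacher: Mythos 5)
Your proof is correct, and it is best described as a self-contained expansion of the paper's argument rather than a different strategy. The existence step is identical: both use \ref{standing assumption: C has enough projectives} to produce a projective surjecting onto $S$ and \ref{standing assumption: C finite} to split off an indecomposable summand that still surjects. Where the paper then outsources the remaining structure to Krause's results on Krull--Schmidt categories (indecomposables have local endomorphism rings; an indecomposable projective has a unique maximal subobject), you prove those inputs directly: Hom-finiteness \ref{standing assumption: C Hom-finite} makes $\End(P_S)$ a finite-dimensional algebra with no nontrivial idempotents, hence local, and your ``local ring trick'' ($\pi e=\pi$ with $\pi\neq 0$ forces $e$ invertible, since otherwise $1-e$ would be a unit annihilating $\pi$) correctly delivers both the superfluousness of $\ker\pi_S$ and, via the split mono $f$ into the indecomposable $Q$, the uniqueness of $P_S$. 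Your deduction of $\Hom(P_S,S')=0$ from superfluousness of $\ker\pi_S$ is exactly the content the paper extracts from the cited lemma (distinct maximal subobjects would generate all of $P_S$, contradicting superfluousness), reached by direct contradiction instead of citation. All the individual steps check out, including the surjectivity of $\pi_S|_L$ when $L+\ker\pi_S=P_S$ and the lifting $\sigma$ with $\pi_S\psi\sigma=\pi_S$. The trade-off is the expected one: the paper's proof is shorter, while yours is self-contained and makes visible precisely where each of the three standing assumptions enters.
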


\begin{proof}
Let $S$ be a simple object. By \cref{standing assumptions on C}, there is a projective object $P$ in $\cC$ and a nonzero morphism $P \to S$, which is necessarily an epimorphism since $S$ is simple.
By the finite length assumption in \cref{standing assumptions on C}, $P$ admits a finite direct sum decomposition $P = \bigoplus_j P_j$, where each $P_j$ is indecomposable projective. Since $P_j \to S$ is nonzero for some $j$, we can set $P_S = P_j$. Uniqueness follows from sequentially applying \cite[Thm. 5.5, Lem. 3.6, Cor. 3.5]{krause-ks-cats}.

Let $S'$ be any other simple object in $\cC$, and suppose that there is a nonzero morphism $P_S \to S'$, which necessarily is an epimorphism. Since both $S'$ and $S$ are simple, the kernels of $P_S \to S$ and $P_S \to S'$ are maximal proper subobjects of $P_S$, and so they must agree by \cite[Lem. 3.6]{krause-ks-cats}. This implies $S \cong S'$.
\end{proof}

\begin{notn}
\label{notation: I and S_i and P_i}
\label{defn: canonical set projective generators}
    Let $I$ denote the set of isomorphism classes of simple objects in $\cC$. We denote by $\{ S_i \}_{i \in I}$ a set of simple objects from each of the isomorphism classes, and choose their respective indecomposable projective covers $\{ P_i \}_{i \in I}$.
    By \cref{lemma: projective covers of simples}, the latter set is independent of any choice, up to isomorphism.
    We call $\{P_i\}_{i \in I}$ as above the \emph{canonical set of projective generators}.
\end{notn}

\subsection{K-theory and G-theory}
\label{subsection: K-theory and G-theory}

In this subsection, we recall the standard Grothendieck group construction. In algebraic geometry, this construction is often applied to the category of coherent sheaves on a smooth variety, which happens to coincide with the category of perfect objects. In general however, because abelian categories may not have finite global dimension, we need to distinguish Grothendieck groups of the given abelian category and of the subcategory of perfect objects. We mimic the construction of Quillen’s algebraic K-theory spectrum \cite{quillen-K-theory} as a group completion, with respect to direct sum as the group operation, of the groupoid of finitely generated projective modules, omitting the higher homotopy data.

\begin{defn}
\label{definition: perfect objects}
Let $\cC$ be an essentially small abelian category.
\begin{itemize}
    \item An object $V$ of $\cC$ is \emph{perfect} if it admits a finite resolution by projective objects in $\cC$. We denote by $\Perf (\cC)$ the full additive subcategory of $\cC$ consisting of perfect objects.
    \item The \emph{(zeroth) K-theory} of $\cC$ is denoted by $\Kzero (\cC)$ and is the abelian group generated by isomorphism classes of objects in $\Perf (\cC)$, subject to the additivity relations in short exact sequences.
    \item 
    The \emph{(zeroth) G-theory} of $\cC$ is denoted by $\Gzero (\cC)$ and is the abelian group generated by isomorphism classes of objects in $\cC$, subject to the additivity relations in short exact sequences.
    \item 
    We say that a G-theory class $\alpha \in \Gzero (\cC)$ is \emph{effective} if there is $V \in \cC$ such that $\alpha = {[V]}$. We denote by $\Geff (\cC)$ the submonoid of effective G-theory classes. We similarly define $\Keff (\cC)$ as the set of $\alpha \in \Kzero(\cC)$ such that $\alpha = [V]$ for some perfect object $V$.
\end{itemize}
\end{defn}

We will prove in \cref{lemma: dual bases} that we have a natural embedding $\iota \colon \Kzero(\cC) \to \Gzero(\cC)$.
We note, however, that $\Keff(\cC) \subseteq \Kzero(\cC) \cap \Geff(\cC)$ is not an equality in general.
For example, if $\cC$ is the category of finite-dimensional representations of quiver $A_1 = \begin{tikzcd} \bullet \arrow[r] & \bullet \end{tikzcd}$, then there are two projective objects $P_1 = (k \to k)$ and $P_2 = (0\to k)$.
The object $S_1 = (k\to 0)$ is perfect, because it admits a two-step projective resolution $0 \to P_2 \to P_1 \to S_1 \to 0$; however, its K-theory class is not effective.

\begin{lemma}
\label{lemma: define Euler pairing}
Suppose that $\cC$ is nearly finite. Let $V$ be an object of $\Perf (\cC)$ and $W$ be an object of $\cC$, then we have the following.
\begin{enumerate}
    \item $\Ext^i(V,W) = 0$ for all $i \gg 0$.
    \item For all $i$, we have $\dim_k \Ext^i(V,W) < \infty$.
    \item There is a natural pairing $\Kzero(\cC) \otimes_{\bZ} \Gzero(\cC) \to \bZ$ defined on effective classes as
    \[
        ([V], [W]) \longmapsto \langle [V], [W] \rangle = \chi \left( \RHom_\cC (V, W) \right).
\]
    We write $\langle V, W \rangle = \langle [V], [W] \rangle$ for brevity. \qed
\end{enumerate}
\end{lemma}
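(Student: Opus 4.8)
The plan is to deduce all three statements from the existence of a finite projective resolution of $V$ together with the Hom-finiteness hypothesis \ref{standing assumption: C Hom-finite}. Since $V \in \Perf(\cC)$, I would fix a finite resolution
\[
0 \to P_n \to \dots \to P_0 \to V \to 0
\]
with each $P_j \in \cC$ projective, and compute $\Ext^i(V,W)$ as the cohomology of the cochain complex $\Hom_\cC(P_\bullet, W)$; this is legitimate because $\cC$ has enough projectives \ref{standing assumption: C has enough projectives}. Statement (i) is then immediate, since $\Hom_\cC(P_\bullet, W)$ is concentrated in degrees $0, \dots, n$, so $\Ext^i(V,W) = 0$ for $i > n$. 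For (ii), each $\Hom_\cC(P_j, W)$ is finite-dimensional by \ref{standing assumption: C Hom-finite}, and $\Ext^i(V,W)$ is a subquotient of $\Hom_\cC(P_i, W)$, hence finite-dimensional as well.

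Granting (i) and (ii), the integer
\[
\chi(\RHom_\cC(V,W)) = \sum_i (-1)^i \dim_k \Ext^i(V,W)
\]
is a well-defined finite sum, with $\RHom_\cC(V,W)$ represented by the bounded complex $\Hom_\cC(P_\bullet, W)$ of finite-dimensional vector spaces. To produce the pairing of (iii), I would verify biadditivity in short exact sequences separately in each slot, using the standard fact that the alternating sum of dimensions along a bounded exact sequence of finite-dimensional vector spaces vanishes. In the second variable, given $0 \to W' \to W \to W'' \to 0$ in $\cC$, applying the exact functors $\Hom_\cC(P_j, -)$ termwise yields a short exact sequence of complexes $0 \to \Hom_\cC(P_\bullet, W') \to \Hom_\cC(P_\bullet, W) \to \Hom_\cC(P_\bullet, W'') \to 0$, whose long exact cohomology sequence is the long exact sequence of $\Ext$-groups; since $V$ is perfect, all three families vanish above degree $n$ by (i), so the sequence is bounded and additivity of $\chi$ follows, showing the pairing descends to $\Gzero(\cC)$ in the second slot even though $W$ need not be perfect. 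In the first variable, I would take a short exact sequence $0 \to V' \to V \to V'' \to 0$ of perfect objects (the sequences imposing relations in $\Kzero(\cC)$) and use the horseshoe lemma to obtain a degreewise-split short exact sequence of finite projective resolutions; applying $\Hom_\cC(-, W)$ gives a short exact sequence of complexes and hence the contravariant long exact sequence, again bounded by (i), yielding additivity in $\Kzero(\cC)$. Biadditivity then factors $\langle -, - \rangle$ through $\Kzero(\cC) \otimes_\bZ \Gzero(\cC)$.

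The points demanding the most care are homological rather than conceptual. Because $\cC$ is assumed to have enough projectives but not necessarily enough injectives, I must produce the second-variable long exact sequence from the projective resolution of $V$ and the exactness of $\Hom_\cC(P_j, -)$, rather than from an injective resolution of $W$; likewise the first-variable long exact sequence must be built via the horseshoe lemma. The remaining technical subtlety is the sign bookkeeping needed to collapse the three-term periodicity of each long exact sequence into additivity of $\chi$. The one structural feature worth emphasizing is the asymmetry of the construction: boundedness of the $\Ext$-groups comes entirely from perfectness of $V$, so no finiteness of projective dimension is imposed on $W$, which is precisely why the natural domain of the pairing is $\Kzero(\cC) \otimes_\bZ \Gzero(\cC)$ rather than $\Kzero(\cC) \otimes_\bZ \Kzero(\cC)$.
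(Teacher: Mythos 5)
Your argument is correct, and it is precisely the standard reasoning the paper has in mind: the authors state this lemma with a terminal \qed and omit the proof entirely, treating the computation of $\Ext^i(V,W)$ from a finite projective resolution of $V$, the finite-dimensionality from \ref{standing assumption: C Hom-finite}, and biadditivity via the two long exact sequences as routine. Your write-up correctly supplies all of these details, including the genuinely necessary points that the second-variable long exact sequence must come from exactness of $\Hom_\cC(P_j,-)$ rather than injective resolutions, and that the horseshoe lemma handles the first variable.
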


\begin{lemma}
\label{lemma: dual bases}
Suppose that $\cC$ is nearly finite. Let $\{P_i\}_{i \in I}$ be the canonical set of projective generators as in \cref{defn: canonical set projective generators}.
    \begin{enumerate}
        \item \label{item: dual bases}
        We have $\Kzero (\cC) \cong \bZ^{\oplus I}$ with basis $\{ [P_i] \}_{i \in I}$, and $\Gzero (\cC) \cong \bZ^{\oplus I}$ with basis $\{ [S_i] \}_{i \in I}$. These bases are dual up to positive scalar multiples, i.e., $\langle P_i, S_i \rangle = \tau_i \geq 1$ and $\langle P_i, S_j \rangle = \delta_{ij} \tau_i$.
        \item \label{item: Euler pairing is nondegenerate}
        The pairing $\Kzero (\cC) \times \Gzero (\cC) \to \bZ$ from \cref{lemma: define Euler pairing} is nondegenerate.
        \item \label{item: injection K into G} \label{item: if C=Perf C then K=G}
        There is a natural injection $\iota \colon \Kzero (\cC) \to \Gzero (\cC)$ given by $\iota([V]) = [V]$.
        Furthermore, if $\cC = \Perf (\cC)$, then $\iota$ is an isomorphism.
    \end{enumerate}
\end{lemma}

\begin{proof}
    For Part \ref{item: dual bases}, the isomorphism $\Kzero(\cC) \cong \bZ^{\oplus I}$ follows from the fact that every short exact sequence of projective objects splits and the Krull-Schmidt property \cite[Lem. 5.1, Thm. 5.5, Thm. 4.2]{krause-ks-cats}.
    On the other hand, the isomorphism for $\Gzero(\cC)$ follows from the existence of Jordan-H\"older filtrations and the uniqueness of the corresponding associated graded objects.
    The fact that they are dual bases up to scaling follows from the vanishing of $\Hom(P_S, S')$ stated in \Cref{lemma: projective covers of simples}.
    Part \ref{item: Euler pairing is nondegenerate} follows at once from the existence of dual bases.

    Injectivity in Part~\ref{item: if C=Perf C then K=G} follows from the Jordan-H\"older property and from the fact that every short exact sequence of projective objects splits. 
    Part~\ref{item: if C=Perf C then K=G} is a direct consequence of \cref{definition: perfect objects}.
\end{proof}

\subsection{Finiteness conditions in Grothendieck categories}

In this subsection, we fix a Grothendieck category $\cA$. Recall that an abelian category is called \emph{Grothendieck} if it has arbitrary direct sums, filtered colimits are exact, and it has a generator. A \emph{generator} is an object $G \in \cA$ such that $\Hom_\cA(G,\blank)$ detects zero morphisms.

\begin{defn}[{\cite[pp. 91, 92]{popescu}}]
    An object $V \in \cA$ is called
    \begin{itemize}
        \item \emph{finitely generated (finite type)} if,
        whenever $V \cong \colim_i V_i$ for a filtered diagram
        of subobjects $V_i \subset V$,
        there is an index $i$ for which $V = V_i$;
        \item \emph{finitely presented} if it is finitely generated, and 
        the kernel of any epimorphism $W \twoheadrightarrow V$ from a finitely generated object $W$ is finitely generated itself; equivalently, if $\Hom_\cA(V,\blank)$ commutes with filtered colimits;
        \item \emph{noetherian} if every subobject of $V$ is finitely generated; equivalently, if subobjects of $V$ satisfy the ascending chain condition.
    \end{itemize}
\end{defn}

When $\cA \cong R\dash\Mod$, the three notions are equivalent to the algebraic notions of finite generation, finite presentation and being noetherian, respectively. 
In a more general category admitting filtered colimits, objects $V$ for which $\Hom_{\cA}(V,\blank)$ commutes with filtered colimits are called \emph{compact}. These have a special importance for the moduli theory we consider. 

\begin{notn}
    We denote by $\cA\rfp$ the full subcategory of finitely presented objects in $\cA$. 
\end{notn}

\begin{defn}
    A Grothendieck category is called \emph{locally noetherian} if it admits a set of noetherian projective generators.
\end{defn}

We recall the following useful fact.

\begin{thm}[{\cite[Ch. 5, Thm. 8.7]{popescu}}]
    Let $\cA$ be a locally noetherian category, then the three notions of finiteness for objects coincide. Hence, $\cA\rfp$ is an abelian category whose kernels and cokernels coincide with those in $\cA$.
\end{thm}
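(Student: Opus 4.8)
The plan is to establish the chain of implications among the three finiteness notions, the decisive input being that the noetherian generators force every finitely generated object to be noetherian; the assertion about $\cA\rfp$ then follows formally. First I would record the two implications valid in any Grothendieck category: a noetherian object is finitely generated (apply the definition to the subobject $V \subset V$), and by definition a finitely presented object is finitely generated. So it suffices to prove that every finitely generated object is noetherian and that noetherian implies finitely presented; combined with the above, these force all three notions to coincide.

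The heart of the argument is the claim that in a locally noetherian category every finitely generated object is noetherian, which I would prove in three steps. (i) If $V$ is finitely generated, then $V$ is a quotient of a \emph{finite} direct sum of the noetherian generators $\{G_\alpha\}$: since the $G_\alpha$ form a set of generators, the canonical map $\bigoplus_{\alpha, f \in \Hom_\cA(G_\alpha, V)} G_\alpha \twoheadrightarrow V$ is an epimorphism, and writing $V$ as the filtered colimit of the images of finite partial sums and invoking finite generation of $V$ produces a single finite subsum $\bigoplus_{k=1}^n G_{\alpha_k} \twoheadrightarrow V$. (ii) A finite direct sum of noetherian objects is noetherian, by the standard fact that in a short exact sequence the middle term is noetherian iff the sub and quotient are, applied inductively. (iii) A quotient of a noetherian object is noetherian, since subobjects of the quotient correspond to subobjects of the source containing the kernel, so the ascending chain condition is inherited. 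Composing (i)--(iii) shows $V$ is noetherian.

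With this secured, the remaining implication is immediate: if $V$ is finitely generated, hence noetherian, then for any epimorphism $W \twoheadrightarrow V$ with $W$ finitely generated, $W$ is itself noetherian by the above, so its subobject $\ker(W \to V)$ is finitely generated; thus $V$ is finitely presented, closing the loop. For the statement about $\cA\rfp$, I would use the identification of finitely presented objects with noetherian ones and check closure under the kernels and cokernels computed in $\cA$: a kernel is a subobject of a noetherian object, hence finitely generated and therefore noetherian; a cokernel is a quotient of a noetherian object, hence noetherian by step (iii); and finite direct sums remain noetherian by step (ii). Since a full additive subcategory of an abelian category closed under kernels and cokernels is itself abelian with the inherited structure, $\cA\rfp$ is abelian and its kernels and cokernels agree with those of $\cA$.

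I expect the main obstacle to be steps (i)--(ii): reducing an arbitrary finitely generated object to a finite sum of generators and transporting the noetherian property across that finite sum. Once the implication \emph{finitely generated $\Rightarrow$ noetherian} is in place, every other implication and the abelianness of $\cA\rfp$ are purely formal consequences of closure properties in the ambient abelian category.
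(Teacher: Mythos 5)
Your argument is correct and essentially complete. Note that the paper offers no proof of this statement at all: it is imported verbatim from Popescu (Ch.~5, Thm.~8.7), so there is no in-paper argument to compare against; what you have written is the standard self-contained proof, and the logical skeleton (trivial implications, then \emph{finitely generated} $\Rightarrow$ \emph{noetherian} via a finite epimorphic cover by generators, then \emph{noetherian} $\Rightarrow$ \emph{finitely presented}, then closure of $\cA\rfp$ under kernels, cokernels and finite sums) is exactly right. Two small points you are implicitly leaning on and could state: (a) that the canonical map $\bigoplus_{\alpha, f} G_\alpha \to V$ is an epimorphism requires the faithfulness characterization of a generating set, and that $V$ is the filtered colimit of the images of finite subsums uses exactness of filtered colimits in a Grothendieck category; (b) the equivalence of ``every subobject is finitely generated'' with the ascending chain condition, which you switch between freely in steps (ii)--(iii), is asserted in the paper's definition but itself needs the same exactness of filtered colimits. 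Finally, your proof never uses projectivity of the generators, which the paper's definition of \emph{locally noetherian} happens to include; this is harmless, since the theorem holds (and is usually stated) for a set of noetherian generators with no projectivity assumption.
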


\subsection{Base-change of abelian categories}
\label{subsection: background base-change}

Let $\cA$ be a $k$-linear Grothendieck abelian category and $R$ a commutative $k$-algebra. We recall some notions from \cite{artin-zhang}.

\begin{defn}[{\cite[B2]{artin-zhang}}] We denote by $\cA_R$ the category of pairs $(V, \psi)$ where $V \in \cA$ and $\psi$ is a homomorphism of $k$-algebras $\psi \colon R \to \End_{\cA}(V)$.
Objects of $\cA_R$ are called \emph{$R$-module objects} in $\cA$, and the homomorphism $\psi$ is called an \emph{$R$-module structure} on $V$.
The set of morphisms between two pairs $(V, \psi) \to (W, \xi)$ consists of those morphisms in $\Hom_{\cA}(V, W)$ that are compatible with the corresponding $R$-module structures on $V$ and $W$.
\end{defn}

For any two commutative $k$-algebras $R$, $T$ with a homomorphism $R \to T$, there is an associated forgetful functor $|_R \colon \cA_T \to \cA_R$.

In \cite[B3]{artin-zhang}, Artin and Zhang define a right exact functor of abelian categories $V\otimes_R \blank \colon R\dash\Mod \to \cA_R$ for an object $V$ of $\cA_R$. For any fixed $R$-module $M$, they similarly define a right exact functor $\blank\otimes_R M \colon \cA_R \to \cA_R$. If $M=T$ is equipped with the structure of a $k$-algebra, then $\blank \otimes_R T$ factors through the forgetful functor $\cA_T \to \cA_R$, and so we get a base-change functor $\blank\otimes_R T \colon \cA_R \to \cA_T$.

\begin{prop}[{\cite[B3.16]{artin-zhang}}]
\label{prop: tensor-Hom adjunction}
    Let $R \to T$ be a map of $k$-algebras. Then the functor 
    $\blank \otimes_R T \colon \cA_R \to \cA_T$ is the left adjoint of
    $|_R \colon \cA_T \to \cA_R$. \qed
\end{prop}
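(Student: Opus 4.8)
The plan is to deduce the base-change adjunction from the ``internal'' tensor--Hom adjunction inside $\cA_R$: for $V, W' \in \cA_R$ and an $R$-module $M$ there should be a natural isomorphism
\[
    \Hom_{\cA_R}(V \otimes_R M,\, W') \;\cong\; \Hom_R\big(M,\, \Hom_{\cA_R}(V, W')\big),
\]
where $\Hom_{\cA_R}(V, W')$ is an $R$-module because $R$ is commutative (so that the $R$-action by post-composition preserves $\cA_R$-morphisms). Granting this internal adjunction, the proposition follows by specializing to $M = T$ and $W' = W|_R$ and then isolating the $T$-linear morphisms.

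First I would establish the internal adjunction from the Artin--Zhang definition of $\otimes_R$ via presentations. For $M = R$ both sides equal $\Hom_{\cA_R}(V, W')$, and for $M = R^{(I)}$ free both sides equal $\prod_I \Hom_{\cA_R}(V, W')$, since $V \otimes_R R^{(I)} = V^{(I)}$ and $\Hom$ carries coproducts to products. For general $M$ I would choose a free presentation $R^{(J)} \to R^{(I)} \to M \to 0$; right-exactness of $V \otimes_R \blank$ together with left-exactness of $\Hom$ in its first variable turns this into a comparison of two left-exact sequences, and naturality plus the free case forces the middle map to be an isomorphism.

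Specializing to $M = T$, $W' = W|_R$ yields $\Hom_{\cA_R}(V \otimes_R T,\, W|_R) \cong \Hom_R\big(T,\, \Hom_{\cA_R}(V, W|_R)\big)$. A morphism $\phi$ on the left underlies a morphism in $\Hom_{\cA_T}(V \otimes_R T, W)$ precisely when it is $T$-linear for the $T$-action on $V \otimes_R T$ coming from the tensor factor and the action $\xi \colon T \to \End_{\cA}(W)$ on the target. I would check that under the isomorphism this condition singles out exactly the $R$-module maps $f \colon T \to \Hom_{\cA_R}(V, W|_R)$ with $f(t) = \xi(t) \circ f(1_T)$; such $f$ are determined by the single value $f(1_T)$, producing the asserted bijection $\Hom_{\cA_T}(V \otimes_R T, W) \cong \Hom_{\cA_R}(V, W|_R)$. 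Unwinding it, this is the map $\phi \mapsto (\phi|_R) \circ \eta_V$, where the unit $\eta_V \colon V \to (V \otimes_R T)|_R$ is induced by $R \to T$; naturality in $V$ and $W$ is inherited from that of the internal adjunction and of $\eta$.

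The hard part will be the final identification in the element-free setting: since $\cA$ has no underlying elements, the correspondence ``$\phi$ is $T$-linear $\Longleftrightarrow$ $f(t) = \xi(t)\circ f(1_T)$'' cannot simply be read off a formula such as $v \otimes t \mapsto \xi(t)\,\phi(v\otimes 1)$, but must be extracted from the way the $T$-action on $V \otimes_R T$ is encoded in the presentation-level definition of $\otimes_R$ and from the compatibility of the internal adjunction with these actions. Equivalently, one verifies the two triangle identities for $\eta$ and the counit $\epsilon_W \colon (W|_R) \otimes_R T \to W$ obtained by transporting $\xi \in \Hom_R\big(T, \End_{\cA_R}(W|_R)\big)$ through the internal adjunction; here $T$-linearity of $\epsilon_W$ is precisely the assertion that $\xi$ respects multiplication in $T$. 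These checks are where right-exactness and additivity of the Artin--Zhang tensor functor are doing the real work.
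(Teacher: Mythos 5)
The paper offers no proof of this proposition: it is stated verbatim as a quotation of \cite[B3.16]{artin-zhang}, with the end-of-proof symbol indicating that the argument is deferred to that reference. Your write-up is a correct reconstruction of the standard proof of the cited result, and its two-step decomposition --- (a) the internal adjunction $\Hom_{\cA_R}(V\otimes_R M, W') \cong \Hom_R(M, \Hom_{\cA_R}(V,W'))$, proved by reducing to free modules via a presentation and a five-lemma comparison of left-exact sequences, then (b) identifying the $T$-linear morphisms out of $V\otimes_R T$ with the $R$-linear maps $f\colon T \to \Hom_{\cA_R}(V, W|_R)$ satisfying $f(t)=\xi(t)\circ f(1_T)$, which are determined by $f(1_T)$ --- is essentially how the adjunction is established in Artin--Zhang's appendix, where $V\otimes_R M$ is defined precisely by such presentations. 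Two points are worth making explicit if you flesh this out: first, both the $R$-module structure on $\Hom_{\cA_R}(V,W')$ by postcomposition and the fact that $\xi(t)\circ g$ is again an $\cA_R$-morphism rest on commutativity of $T$ (so that $\xi(t)$ commutes with $\xi(r)$ for $r \in R$); second, the natural transformation in step (a) must be exhibited before the five lemma can be invoked, e.g.\ by sending $\phi$ to $m \mapsto \phi\circ\bigl(V\otimes_R(r\mapsto rm)\bigr)$, using functoriality of $V\otimes_R\blank$. Neither is a gap --- you have signposted exactly where the element-free care is needed --- they are just the bookkeeping that turns the sketch into a complete argument.
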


\begin{defn}[{\cite[C1]{artin-zhang}}] \label{defn: flatness} 
An object $V \in \cA_R$ is called \emph{$R$-flat} if the functor $V \otimes_R \blank \colon R\dash\Mod \to \cA_R$ is exact.
\end{defn}

For the current setup of moduli theory to work, we need the following condition.

\begin{defn}[see \cite{artin-small-zhang}] \label{defn: strongly noetherian} The category $\cA$ is called \emph{strongly locally noetherian} if for all noetherian $k$-algebras $R$, the base-change $\cA_R$ is locally noetherian.
\end{defn}

It is nontrivial to check \Cref{defn: strongly noetherian} in general \cite{resco-small}. We prove that under our assumptions, the strong noetherian property is automatic.

Let $\cC$ be a $k$-linear abelian category which is Hom-finite \ref{standing assumption: C Hom-finite} and finite length \ref{standing assumption: C finite}. Denote by $\cA$ its \emph{ind-completion} $\cA = \Ind (\cC)$ \cite[\S8.6]{kashiwara-schapira}.
By \cite[Thm. 8.6.5(vi)]{kashiwara-schapira}, $\cA$ is Grothendieck.
Every finitely presented object $V$ in $\cA$ can be shown to be a direct summand of an object from $\cC$, because $V$ is a filtered colimit of some objects $C_j \in \cC$, and then the identity $\operatorname{id}_V$ in $\Hom(V, \colim_j C_j)$ factors through some $C_\ell$.
By \cite[Prop. 8.6.11]{kashiwara-schapira}, the subcategory $\cC$ is thick in $\cA$, hence
we have $\cA\rfp \simeq \cC$.

\begin{prop}
\label{proposition: Hom-finite and finite cat is fdcomod}
    If $\cC$ is Hom-finite \ref{standing assumption: C Hom-finite} and finite length \ref{standing assumption: C finite}, then $\cC$ is equivalent to the category $H \dash \fdcomod$ 
    of finite dimensional comodules over some $k$-coalgebra $H$.
\end{prop}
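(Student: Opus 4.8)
The plan is to reduce the statement to the structure theory of \emph{locally finite} Grothendieck categories. First I would pass to the ind-completion $\cA = \Ind(\cC)$, which by the discussion preceding the statement is a $k$-linear Grothendieck category with $\cA\rfp \simeq \cC$. Using hypotheses \ref{standing assumption: C Hom-finite} and \ref{standing assumption: C finite}, I would verify that $\cA$ is locally finite: every object is a filtered union of its finitely presented subobjects, the finitely presented objects are exactly the finite-length objects (hence exactly $\cC$), and $\Hom_\cA(V,W)$ is finite-dimensional for all $V, W \in \cC$. The last point is immediate from Hom-finiteness of $\cC$, while the identification of finitely presented with finite-length objects follows from $\cA\rfp \simeq \cC$ together with the finite-length hypothesis \ref{standing assumption: C finite}.

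The heart of the argument is the classical theorem of Gabriel and Takeuchi: a $k$-linear Grothendieck category is locally finite if and only if it is equivalent to the comodule category $H\dash\Comod$ for some $k$-coalgebra $H$, and under any such equivalence the finitely presented objects on one side match the finite-dimensional comodules on the other. Granting this, applying it to $\cA$ and restricting the equivalence to $\cA\rfp \simeq \cC$ and to $H\dash\fdcomod$ yields the desired equivalence $\cC \simeq H\dash\fdcomod$, since the finitely presented comodules over a coalgebra are precisely the finite-dimensional ones by the fundamental theorem of comodules.

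For a self-contained construction of $H$, I would realize it as a \emph{coefficient coalgebra}. Every Grothendieck category has enough injectives, so each simple $S_i$ admits an injective hull $E_i \in \cA$; since the finite-length objects of $\cA$ are noetherian, arbitrary direct sums of injectives remain injective, and $E = \bigoplus_{i \in I} E_i$ is an injective cogenerator. The coalgebra $H$ is then assembled from the spaces of matrix coefficients $\Hom_\cA(E_j, E_i)^\ast$, organized as a filtered colimit of the finite-dimensional coalgebras attached to finite-length subobjects of $E$, with comultiplication and counit dual to composition and to identity morphisms. Each $V \in \cC$ acquires a canonical coaction by pairing against $E$, which defines the comparison functor $\cC \to H\dash\fdcomod$; its full faithfulness and essential surjectivity are checked on finite-length objects, where every space in sight is finite-dimensional and the claim reduces to linear duality.

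The main obstacle is precisely that $\cC$ may have infinitely many pairwise non-isomorphic simple objects, so $H$ is genuinely infinite-dimensional and cannot be replaced by a finite-dimensional algebra whose module category one controls directly; this is exactly what forces the passage to coalgebras and the colimit construction above, rather than a reconstruction from a single fiber functor to $\Vect$. Verifying that the finite-dimensional pieces glue to a single coalgebra $H$, and that the resulting comparison functor is an equivalence compatible with the finite-dimensionality conditions on both sides, is the technical core of the proof; the remaining checks (exactness, that finite length is preserved) are routine given local finiteness.
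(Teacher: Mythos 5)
Your proposal is correct and follows essentially the same route as the paper: pass to $\cA = \Ind(\cC)$, verify it is a locally finite (equivalently, ``finite type'' in Takeuchi's sense) Grothendieck category using \ref{standing assumption: C Hom-finite} and \ref{standing assumption: C finite}, invoke Takeuchi's reconstruction theorem to get $\cA \simeq H\dash\Comod$, and identify $\cC$ with the finite-dimensional comodules via the fundamental theorem of coalgebras. The additional sketch of the coefficient-coalgebra construction is a reasonable unpacking of Takeuchi's proof but is not needed beyond the citation.
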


\begin{proof}
    Our assumptions imply that $\cA = \Ind (\cC)$ is of finite type in the sense of \cite[Def. 4.4]{Takeuchi}, because $\cA$ is Grothendieck as noted above, and the other conditions are satisfied by \ref{standing assumption: C Hom-finite} and \ref{standing assumption: C finite}.
    By \cite[Thm. 5.1]{Takeuchi}, $\cA$ is equivalent to the category $H \dash \Comod$ of comodules over some coalgebra $H$.
    This identifies $\cC$ with a subcategory of $H \dash \Comod$ of objects of finite length.
    The fundamental theorem of coalgebra states that every $H$-comodule is a union of finite dimensional subcomodules \cite[Thm. 2.1.7]{DNR_Hopf_algebras}, so a comodule of finite length is necessarily finite dimensional.
\end{proof}

\begin{thm} \label{thm: ind-completions of Hom-finite and finite categories are strongly locally noetherian}
    If $\cC$ is an essentially small $k$-linear abelian category which is Hom-finite \ref{standing assumption: C Hom-finite} and finite length \ref{standing assumption: C finite}, then $\cA = \Ind (\cC)$ is strongly locally noetherian.
\end{thm}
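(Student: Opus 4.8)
The plan is to reduce everything to the theory of comodules. By \cref{proposition: Hom-finite and finite cat is fdcomod} there is a $k$-coalgebra $H$ with $\cC \simeq H\dash\fdcomod$ and $\cA = \Ind(\cC) \simeq H\dash\Comod$. Fix a noetherian $k$-algebra $R$; I must show that $\cA_R$ is locally noetherian. Unwinding the definition, an object of $\cA_R$ is an $H$-comodule $V$ together with a $k$-algebra map $R \to \End_{\cA}(V)$, that is, an $R$-module structure on $V$ for which the coaction $\rho\colon V \to V \otimes_k H$ is $R$-linear (with $R$ acting on the first factor). This is precisely a comodule over the $R$-coalgebra $C := H \otimes_k R$, so $\cA_R \simeq \Comod_R(C)$. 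Since $H$ is free as a $k$-module, $C$ is free, hence flat, over $R$; consequently the forgetful functor $\cA_R \to R\dash\Mod$ is exact and faithful, kernels and cokernels are computed in $R\dash\Mod$, filtered colimits are exact, and $\cA_R$ is Grothendieck once a generator is exhibited (see below).

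First I would show that every object $V \in \cA_R$ is the filtered union of its subcomodules that are finitely generated as $R$-modules. Given $v \in V$, the fundamental theorem of comodules over the field $k$ \cite[Thm. 2.1.7]{DNR_Hopf_algebras} produces a finite-dimensional $k$-subcomodule $W \subseteq V$ with $v \in W$. Because $\rho$ is $R$-linear and $R$ acts trivially on the $H$-factor, we have $\rho(R\cdot W) = R\cdot\rho(W) \subseteq (R\cdot W)\otimes_k H$, so the $R$-submodule $R\cdot W$ generated by $W$ is again a subcomodule; moreover it is generated over $R$ by a $k$-basis of $W$, hence finitely generated as an $R$-module. Such finitely-$R$-generated subcomodules are closed under finite sums and cover $V$, so they form a directed system whose colimit is $V$.

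Next I would invoke the noetherian hypothesis on $R$. If $N$ is a comodule that is finitely generated as an $R$-module, then $N$ is a noetherian $R$-module; since every subcomodule of $N$ is in particular an $R$-submodule, $N$ satisfies the ascending chain condition on subcomodules and is therefore a noetherian object of $\cA_R$. The isomorphism classes of such $N$ form a set, because the underlying finitely generated $R$-module ranges over a set of isomorphism classes and, for each, the coaction is one of a set of $k$-linear maps $N \to N\otimes_k H$. Collecting these yields a set $\mathcal{G}$ of noetherian objects of $\cA_R$ such that every object is a filtered union of subobjects from $\mathcal{G}$; hence $\mathcal{G}$ is a set of noetherian generators and $\cA_R$ is locally noetherian. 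As $R$ was an arbitrary noetherian $k$-algebra, $\cA$ is strongly locally noetherian.

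The main obstacle is the construction in the second paragraph: correctly combining the fundamental theorem of comodules over $k$ with the $R$-linearity of the coaction to manufacture subcomodules that are simultaneously finitely generated over $R$ and exhaust $V$. Once these finitely-$R$-generated subcomodules are available, the passage to noetherianity via the noetherian hypothesis on $R$ and the set-theoretic bookkeeping needed to obtain an honest generating set are routine. (Note that these generators are not claimed to be projective, which is consistent with assuming only \ref{standing assumption: C Hom-finite} and \ref{standing assumption: C finite}, as comodule categories need not have projective generators.)
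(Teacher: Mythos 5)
Your proof is correct, and its engine is the same as the paper's: both arguments identify $\cC$ with finite-dimensional comodules over a coalgebra $H$ via \cref{proposition: Hom-finite and finite cat is fdcomod}, and both ultimately rest on the fact that the forgetful functor from $\cA_R$ to $R\dash\Mod$ is faithful and exact, so that any object whose underlying $R$-module is finitely generated is a noetherian object of $\cA_R$ when $R$ is noetherian. Where you genuinely diverge is in how the noetherian generating set is produced. The paper invokes Artin--Zhang's result that $\{V\otimes R : V\in\cC\}$ generates $\cA_R$ and then only has to note that $F_R(V\otimes R)=F(V)\otimes R$ is a finite free $R$-module; you instead identify $\cA_R$ with comodules over the $R$-coalgebra $H\otimes_k R$ and build generators by hand, passing from a finite-dimensional $k$-subcomodule $W\ni v$ (fundamental theorem of comodules) to the finitely $R$-generated subcomodule $R\cdot W$, whose stability under the coaction is exactly where the $R$-linearity of $\rho$ enters. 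Your route is more self-contained, avoiding the citation for generation, at the cost of having to justify the equivalence $\cA_R\simeq (H\otimes_k R)\dash\Comod$ for arbitrary noetherian $R$ (the paper records this identification only for field extensions, in \cref{corollary: base change to fields yields hom-finite and finite categories}); that identification is correct and your verification goes through. Your closing remark is also well taken: the paper's stated definition of ``locally noetherian'' literally asks for noetherian \emph{projective} generators, but its own proof, like yours, produces only noetherian generators, so the intended definition is clearly the standard one.
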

Since $\cA \simeq \Ind(\cA\rfp)$, this automatically implies that $\cA$ is strongly locally noetherian whenever it is a locally noetherian Grothendieck category for which $\cA\rfp$ is Hom-finite \ref{standing assumption: C Hom-finite} and finite length \ref{standing assumption: C finite}.

\begin{proof}
    Since $\cC$ is a finite length category, it is noetherian, hence $\cA$ is locally noetherian. 
    
    Let $R$ be a noetherian $k$-algebra.
    By \cite[B3.17]{artin-zhang}, $\cC \otimes R = \{ V \otimes R \mid V \in \cC \}$ generates $\cA_R$. If we show that for every $V \in \cC$, its base change $V \otimes R$ is noetherian, then the claim will be proved. So pick $V \in \cC$
    and consider the following commutative diagram of functors, which exists by \cref{proposition: Hom-finite and finite cat is fdcomod}, where the vertical arrows are the forgetful functors.
% https://q.uiver.app/?q=WzAsNCxbMCwwLCIoXFxDb21vZFxcZGFzaCBDKV9SIl0sWzIsMCwiXFxDb21vZFxcZGFzaCBDIl0sWzAsMiwiXFxNb2RcXGRhc2ggUiJdLFsyLDIsIlxcVmVjdF9rIl0sWzEsMCwiUlxcb3RpbWVzIFxcYmxhbmsiLDJdLFsxLDMsIkYiXSxbMCwyLCJGX1IiLDJdLFszLDIsIlJcXG90aW1lcyBcXGJsYW5rIiwyXV0=
\[\begin{tikzcd}
	{(H \dash \Comod)_R} && {H \dash \Comod} \\
	\\
	{R \dash \Mod} && {\Vect_k}
	\arrow["{R\otimes \blank}"', from=1-3, to=1-1]
	\arrow["F", from=1-3, to=3-3]
	\arrow["{F_R}"', from=1-1, to=3-1]
	\arrow["{R\otimes \blank}"', from=3-3, to=3-1]
\end{tikzcd}\]
    By \cref{proposition: Hom-finite and finite cat is fdcomod}, $ F(V) \otimes R$ is a finite free module over a noetherian ring $R$, hence itself noetherian. But $F_R (V \otimes R) = F(V) \otimes R$, so it forces $V \otimes R$ to be noetherian.
\end{proof}

\begin{coroll}
\label{corollary: base change to fields yields hom-finite and finite categories}
    If $\cC$ is an essentially small $k$-linear abelian category which is Hom-finite \ref{standing assumption: C Hom-finite} and finite length \ref{standing assumption: C finite}, then for any field extension $k \subset K$, finitely presented objects in the base change category $((\Ind \cC)_K)\rfp$ form a Hom-finite and finite length category as well.
\end{coroll}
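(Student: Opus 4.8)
The plan is to deduce both finiteness properties from a single fiber functor on the base-changed category, obtained by base-changing along $k\to K$ the fiber functor attached to the coalgebra presentation of $\cC$.

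Since $K$ is a field, it is in particular a noetherian $k$-algebra, so \cref{thm: ind-completions of Hom-finite and finite categories are strongly locally noetherian} applies and $\cA_K=(\Ind\cC)_K$ is locally noetherian. By \cite[Ch. 5, Thm. 8.7]{popescu} the three finiteness notions then agree on $\cA_K$, and $(\cA_K)\rfp$ is an abelian category whose kernels and cokernels are computed in $\cA_K$; this is what gives meaning to the claim that it is ``Hom-finite and finite''. In particular every finitely presented object of $\cA_K$ is finitely generated, which is all I will use.

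By \cref{proposition: Hom-finite and finite cat is fdcomod} I may write $\cA\simeq H\dash\Comod$ together with its forgetful fiber functor $F\colon\cA\to\Vect_k$, which is exact, faithful, and satisfies $\dim_k F(V)<\infty$ for every $V\in\cC$. Base-changing along $k\to K$ exactly as in the commutative square in the proof of \cref{thm: ind-completions of Hom-finite and finite categories are strongly locally noetherian} produces a functor $F_K\colon\cA_K\to\Vect_K$. It is $K$-linear, and it is exact and faithful because the forgetful functor $\cA_K\to\cA$ is exact and faithful (kernels and cokernels in $\cA_K$ being computed in $\cA$) and $F$ is exact and faithful. The same square yields the key compatibility $F_K(V\otimes K)=F(V)\otimes_k K$ for $V\in\cC$, whose right-hand side is finite-dimensional over $K$.

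The crux is the estimate $\dim_K F_K(X)<\infty$ for every $X\in(\cA_K)\rfp$. As $X$ is finitely generated and $\cC\otimes K$ generates $\cA_K$ (by \cite[B3.17]{artin-zhang}, as already used above), there is an epimorphism $\bigoplus_{l=1}^n (V_l\otimes K)\twoheadrightarrow X$ with $V_l\in\cC$; applying the right-exact $F_K$ exhibits $F_K(X)$ as a quotient of the finite-dimensional $K$-vector space $\bigoplus_{l=1}^n F(V_l)\otimes_k K$, which proves the estimate. The two conclusions now follow at once. For $X,Y\in(\cA_K)\rfp$, faithfulness of the $K$-linear functor $F_K$ gives an injection of $K$-vector spaces $\Hom_{\cA_K}(X,Y)\hookrightarrow\Hom_K(F_K X,F_K Y)$, whose target is finite-dimensional, so the category is Hom-finite. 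And since $F_K$ is exact and faithful, it sends any strictly increasing chain of subobjects of $X$ to a strictly increasing chain of subspaces of $F_K(X)$, whence $\length(X)\le\dim_K F_K(X)<\infty$ and the category is finite.

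I expect the main obstacle to be the careful construction of $F_K$: checking that base-changing the coalgebra fiber functor genuinely yields a $K$-linear, exact, faithful functor to $\Vect_K$ intertwining base change of objects with $\blank\otimes_k K$ on underlying spaces, i.e.\ the content of the commutative square. Once that is in hand, the passage from finite generation to an epimorphism out of a finite sum of objects of $\cC\otimes K$ and the two dimension estimates are routine.
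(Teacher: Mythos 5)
Your proof is correct, and it reaches the conclusion by a route that differs in its key step from the paper's. The paper's own proof is a one-liner: it specializes the base-change square to $R=K$ and then invokes the identification $(\Ind\cC)_K \cong (H\dash\Comod)_K \cong (H_K)\dash\Comod$, after which finitely presented objects are exactly the comodules finite-dimensional over $K$ and both finiteness properties are immediate from general facts about comodule categories over a field. You instead avoid identifying $(\cA)_K$ with a comodule category over the base-changed coalgebra $H_K$ altogether: you only extract from \cref{proposition: Hom-finite and finite cat is fdcomod} and the commutative square in the proof of \cref{thm: ind-completions of Hom-finite and finite categories are strongly locally noetherian} an exact, faithful, $K$-linear fiber functor $F_K\colon \cA_K\to\Vect_K$ with $F_K(V\otimes K)=F(V)\otimes_k K$, and then bound $\dim_K F_K(X)$ for finitely generated $X$ by covering $X$ with a finite sum of objects of $\cC\otimes K$. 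Hom-finiteness and finite length then follow from faithfulness and exactness exactly as you say. What your approach buys is that it sidesteps the (unproved in the paper) compatibility $(H\dash\Comod)_K\cong H_K\dash\Comod$ and the identification of finitely presented objects with finite-dimensional comodules, replacing them with routine verifications about the forgetful functor; what the paper's approach buys is brevity and the stronger structural statement that the base-changed category is again a comodule category, which is the more memorable packaging. Both arguments are sound.
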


\begin{proof}
    This follows by specializing the proof of \cref{thm: ind-completions of Hom-finite and finite categories are strongly locally noetherian} to the case of $R=K$. Indeed, we get 
    $(\Ind \cC)_K \cong (H \dash \Comod)_K \cong (H_K) \dash \Comod$, so the subcategory of finitely presented objects consists of comodules that are finite dimensional over $K$.
\end{proof}

\begin{example}[Necessity of Hom-finiteness]
    \Cref{thm: ind-completions of Hom-finite and finite categories are strongly locally noetherian} does not hold if we remove the assumption that $\cC$ is Hom-finite. As an example, let $\cC$ denote the category of finite dimensional vector spaces over the algebraic closure $\overline{k(t)}$ of the field of rational functions $k(t)$. We can view $\cC$ as a $k$-linear category which is not Hom-finite. Every element in $\cC$ has finite length. The ind-completion $\cA$ is the category of all vector spaces over $\overline{k(t)}$. We set $R = \overline{k(t)}$. Then the base-change $\cA_{\overline{k(t)}}$ is the abelian category of $\overline{k(t)} \otimes_k \overline{k(t)}$-modules. The trivial rank 1 module $\overline{k(t)} \otimes_k \overline{k(t)}$ is finitely presented, but it is not noetherian. Indeed, we have a surjection $\overline{k(t)} \otimes_k \overline{k(t)} \to \overline{k(t)} \otimes_{k(t)} \overline{k(t)}$ and the algebra $\overline{k(t)} \otimes_{k(t)} \overline{k(t)}$ has infinitely many minimal primes corresponding to elements of the Galois group $Gal(\overline{k(t)}/k(t))$. This means that $\cA$ is not strongly noetherian.
\end{example}

We end this subsection by stating two useful lemmas about the base-change to a noetherian algebra $R$. As usual, we use the notation $\cA = \Ind(\cC)$.
\begin{lemma}[Nakayama's lemma {\cite[Thm. C4.3]{artin-zhang}}] \label{lemma:nakayama abelian cats} 
Suppose that $\cC$ satisfies \ref{standing assumption: C Hom-finite} and \ref{standing assumption: C finite}. Let $R$ be a noetherian $k$-algebra, and let $W \in (\cA_R)\rfp$ be a noetherian object. Then there is an open subscheme $U \subset \Spec(R)$ such that for any $R$-algebra $T$ we have $W \otimes_R T =0$ if and only if $\Spec(T) \to \Spec(R)$ factors through $U$.
\end{lemma}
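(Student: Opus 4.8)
The plan is to transport the question along a forgetful functor to $R$-modules and reduce it to the classical fact that the support of a finitely generated module over a commutative ring is closed. First I would use \cref{proposition: Hom-finite and finite cat is fdcomod} to identify $\cA = \Ind(\cC)$ with $H\dash\Comod$ for a $k$-coalgebra $H$; as in the proof of \cref{thm: ind-completions of Hom-finite and finite categories are strongly locally noetherian} this identifies $\cA_R$ with $(H_R)\dash\Comod$ for the base-changed $R$-coalgebra $H_R = H \otimes R$, and forgetting the coaction gives an exact faithful functor $F_R \colon \cA_R \to R\dash\Mod$ which detects the zero object, i.e. $F_R(W) = 0$ if and only if $W = 0$. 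The crucial input is the base-change compatibility $F_T(W \otimes_R T) \cong F_R(W) \otimes_R T$ for every $R$-algebra $T$: since $\blank \otimes_R T$ is the left adjoint of $|_R$ by \cref{prop: tensor-Hom adjunction}, and comodule base change is likewise left adjoint to restriction of scalars (with $|_R$ corresponding to the latter under the equivalences), the two base-change functors agree, and on underlying modules comodule base change is ordinary base change. Hence $W \otimes_R T = 0$ if and only if $F_R(W) \otimes_R T = 0$.

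Next I would check that $M := F_R(W)$ is a finitely generated $R$-module. As $W$ is noetherian it is in particular finitely generated in $\cA_R$, and since the objects $\{V \otimes R\}_{V \in \cC}$ generate $\cA_R$, there is an epimorphism $\bigoplus_{j=1}^n V_j \otimes R \twoheadrightarrow W$ with each $V_j \in \cC$. Applying the exact functor $F_R$ presents $M$ as a quotient of $\bigoplus_{j=1}^n F(V_j) \otimes R$, which is a finite free $R$-module because each $F(V_j)$ is finite dimensional over $k$; thus $M$ is finitely generated.

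It then remains to prove the statement for a finitely generated module $M$ over the noetherian ring $R$. Put $I = \mathrm{Ann}_R(M)$ and $U = \Spec(R) \setminus V(I)$, which is open since $\mathrm{Supp}(M) = V(I)$. For any $R$-algebra $T$ one has $M \otimes_R T \cong M \otimes_{R/I} (T/IT)$, so $M \otimes_R T = 0$ whenever $IT = T$; conversely, if $IT \neq T$ then a maximal ideal $\fm$ of $T$ containing $IT$ contracts to a prime $\fp \in V(I) = \mathrm{Supp}(M)$, whence $M \otimes_R \kappa(\fp) \neq 0$ by Nakayama and, base-changing along $\kappa(\fp) \hookrightarrow \kappa(\fm)$, also $M \otimes_R T \neq 0$. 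Since $IT = T$ is exactly the condition that the image of $\Spec(T) \to \Spec(R)$ avoid $V(I)$, equivalently that $\Spec(T) \to \Spec(R)$ factor through $U$, combining with the first paragraph yields the lemma.

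I expect the main obstacle to be the first step, namely verifying cleanly that the Artin--Zhang base change commutes with the forgetful functor $F_R$ for \emph{arbitrary} objects rather than just the generators $V \otimes R$; this is what makes the reduction to commutative algebra legitimate. Once this compatibility and the finite generation of $M$ are secured, the remainder is routine commutative algebra.
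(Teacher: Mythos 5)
Your proof is correct, but note that the paper does not actually prove this lemma: it is quoted directly from Artin--Zhang \cite[Thm.~C4.3]{artin-zhang}, whose argument is carried out for an arbitrary strongly locally noetherian category and is correspondingly more involved. What you do differently is exploit the extra structure available under \ref{standing assumption: C Hom-finite} and \ref{standing assumption: C finite}: via \cref{proposition: Hom-finite and finite cat is fdcomod} you transport everything along the forgetful functor $F_R\colon \cA_R \to R\dash\Mod$, check that $F_R$ is exact, colimit-preserving and detects zero, that $F_R(W)$ is finitely generated, and that $F_R$ intertwines the Artin--Zhang base change with the ordinary one, after which the statement is the classical fact that $\operatorname{Supp}(M)=V(\mathrm{Ann}_R(M))$ is closed for a finitely generated module. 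This buys a short, self-contained proof at the cost of generality (it would not apply to the categories of noncommutative projective geometry that Artin--Zhang also treat), and it is very much in the spirit of the paper's own proof of \cref{thm: ind-completions of Hom-finite and finite categories are strongly locally noetherian}, which uses the same diagram of forgetful functors. One small point worth tightening: for the key compatibility $F_T(W\otimes_R T)\cong F_R(W)\otimes_R T$ you argue by matching left adjoints across the equivalence $\cA_T\simeq (H_T)\dash\Comod$, but the paper only records that equivalence for field extensions (\cref{corollary: base change to fields yields hom-finite and finite categories}); it is cleaner, and avoids this issue entirely, to derive $F_R(W\otimes_R M)\cong F_R(W)\otimes_R M$ for every $R$-module $M$ directly from the presentation-based construction of the Artin--Zhang tensor in \cite[B3]{artin-zhang}, using only that $F_R$ is exact and preserves direct sums. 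With that adjustment the argument is complete.
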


\begin{lemma} 
\label{lemma:generating set base-change}
Suppose that $\cC$ satisfies  \Cref{standing assumptions on C}, and let $\{P_i \}_{i \in I}$ be a set of projective generators in $\cC$. For any noetherian $k$-algebra $R$, we have that $\{P_i \otimes R\}_{i \in I}$ is a set of noetherian projective generators in $\cA_{R} = (\Ind \cC)_R$.
\end{lemma}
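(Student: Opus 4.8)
The plan is to verify separately the three properties packaged in the statement: that each $P_i \otimes R$ is noetherian, that it is projective, and that together they generate $\cA_R$. Two of these are short. Noetherianity is already contained in the proof of \Cref{thm: ind-completions of Hom-finite and finite categories are strongly locally noetherian}: there we showed that for any $V \in \cC$ and any noetherian $R$, the forgetful image $F_R(V \otimes R) = F(V) \otimes R$ is a finite free, hence noetherian, $R$-module, and since $F_R$ is faithful and exact it reflects the ascending chain condition; applying this with $V = P_i$ gives that $P_i \otimes R$ is noetherian. For the generating property, recall from \cite[B3.17]{artin-zhang} that $\{V \otimes R \mid V \in \cC\}$ generates $\cA_R$. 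Since $\{P_i\}_{i \in I}$ is a generating set of $\cC$, every $V \in \cC$ admits an epimorphism $\bigoplus P_i \twoheadrightarrow V$ from a direct sum of the $P_i$. The base-change functor $\blank \otimes_k R \colon \cA \to \cA_R$ is a left adjoint by \Cref{prop: tensor-Hom adjunction} (taking the source algebra to be $k$), hence right exact and compatible with direct sums; applying it yields an epimorphism $\bigoplus (P_i \otimes R) \twoheadrightarrow V \otimes R$. Thus every generator $V \otimes R$ of $\cA_R$ is a quotient of a direct sum of the $P_i \otimes R$, which shows the latter generate.

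The substantive point is projectivity, and here the plan is to reduce to a statement over $k$. The forgetful functor $|_k \colon \cA_R \to \cA$ is exact, and by \Cref{prop: tensor-Hom adjunction} it is right adjoint to $\blank \otimes_k R$. A left adjoint whose right adjoint is exact preserves projective objects, since $\Hom_{\cA_R}(P_i \otimes R, \blank) \cong \Hom_{\cA}(P_i, (\blank)|_k)$ is then a composite of exact functors. Hence it suffices to prove the key claim that every object $P \in \cC$ which is projective in $\cC$ is already projective in $\cA = \Ind(\cC)$. This is the main obstacle: projectivity in $\cC$ only controls lifting against epimorphisms with both terms in $\cC$, whereas in $\cA$ one must lift against arbitrary epimorphisms with possibly infinite source and target.

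To prove the key claim I would exploit that $P \in \cC = \cA\rfp$ is compact in $\cA$. Given an epimorphism $\pi \colon X \twoheadrightarrow Y$ in $\cA$ and a morphism $f \colon P \to Y$, write $X = \colim_i X_i$ as a filtered colimit of objects $X_i \in \cC$, and set $Y_i = \on{im}(X_i \to X \xrightarrow{\pi} Y)$. Each $Y_i$ is a quotient of the noetherian object $X_i$, hence lies in $\cC$, and since $\cA$ is Grothendieck (filtered colimits are exact, so images commute with them) we have $Y = \on{im}(\pi) = \colim_i Y_i$ as a filtered union of subobjects. Compactness of $P$ then factors $f$ through some $g \colon P \to Y_{i_0}$. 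Now $X_{i_0} \twoheadrightarrow Y_{i_0}$ is an epimorphism between objects of $\cC$, so projectivity of $P$ in $\cC$ lifts $g$ to $h_0 \colon P \to X_{i_0}$; composing with the structure map $X_{i_0} \to X$ produces a morphism $h \colon P \to X$, and the image factorization $\pi \circ (X_{i_0} \to X) = (Y_{i_0} \hookrightarrow Y) \circ (X_{i_0} \twoheadrightarrow Y_{i_0})$ gives $\pi \circ h = f$. This establishes that $P$, and therefore each $P_i \otimes R$, is projective, completing the three verifications.
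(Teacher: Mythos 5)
Your proof is correct and follows the same route as the paper, which disposes of generation by citing \cite[Cor.\ B3.17]{artin-zhang}, of projectivity by the tensor--Hom adjunction (\cref{prop: tensor-Hom adjunction}), and of noetherianity by \cref{thm: ind-completions of Hom-finite and finite categories are strongly locally noetherian}. The one place you add genuine content is the compactness argument promoting projectivity of $P_i$ from $\cC$ to $\Ind(\cC)$ before applying the adjunction --- a real gap in the adjunction argument that the paper silently delegates to its citation of \cite[Lem.\ D3.2]{artin-zhang}, and your filtered-image factorization closes it correctly.
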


\begin{proof}
The set $\{P_i \otimes R\}_{i \in I}$ generates $\cA_{R}$ by \cite[Cor. B3.17]{artin-zhang}. Each $\{P_i \otimes R\}_{i \in I}$ is projective by 
tensor-Hom adjunction (\cref{prop: tensor-Hom adjunction}),
% the adjunction isomorphism \cite[Prop. B3.16]{artin-zhang}
or by \cite[Lem. D3.2]{artin-zhang}.
Each $P_i \otimes R$ is noetherian by \Cref{thm: ind-completions of Hom-finite and finite categories are strongly locally noetherian}.
\end{proof}

\subsection{Behavior of \texorpdfstring{$\Hom$}{Hom} in families}

\begin{lemma} \label{lemma:homs from projectives families}
Let $\cA$ be a Grothendieck $k$-linear category. Let $R$ be a $k$-algebra. 
% Let $P$ be a noetherian projective object in $(\cA_{R})\rfp$. 
Let $P \in (\cA_{R})\rfp$ be a finitely generated projective object and let $V \in \cA_R$ be any object. 
Then we have the following.
\begin{enumerate}
    \item \label{item: Hom from fp proj commutes with tensor}
    For all $R$-modules $M$, the natural morphism 
    \[\Hom_{\cA_{R}}(P, V) \otimes_{R} M \to \Hom_{\cA_{R}}(P, V \otimes_{R} M) \]
    is an isomorphism.
    \item \label{item: Hom from fp proj commutes with base change}
    If $T$ is an $R$-algebra, then the natural morphism
    \[ \Hom_{\cA_{R}}(P, V)\otimes _{R} T \to \Hom_{\cA_{T}}(P\otimes_{R} T, V \otimes_{R} T)\]
    is an isomorphism.
    \item \label{item: Hom from fp proj preserves flatness}
    If $V \in \cA_R$ is $R$-flat, then $\Hom_{\cA_{R}}(P,V)$ is an $R$-flat module.
\end{enumerate}
\end{lemma}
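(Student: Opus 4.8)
The plan is to establish part~\ref{item: Hom from fp proj commutes with tensor} first, since parts~\ref{item: Hom from fp proj commutes with base change} and~\ref{item: Hom from fp proj preserves flatness} will follow from it essentially formally. The natural map in~\ref{item: Hom from fp proj commutes with tensor} sends $\phi \otimes m$ to the composite $P \xrightarrow{\phi} V \cong V \otimes_R R \to V \otimes_R M$, the last arrow induced by $1 \mapsto m$; it is $R$-bilinear, hence well defined on the tensor product, and natural in $M$. To prove it is an isomorphism for all $M$, I would show that both sides are right-exact functors of $M$ that commute with arbitrary direct sums. The left-hand side $\Hom_{\cA_R}(P,V) \otimes_R \blank$ has both properties because $\blank \otimes_R M$ does. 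For the right-hand side, $V \otimes_R \blank$ is right exact by the Artin--Zhang construction and commutes with colimits as a left adjoint, while $\Hom_{\cA_R}(P,\blank)$ is exact (as $P$ is projective) and commutes with filtered colimits, in particular direct sums (as $P$ is finitely presented); hence so does the composite $\Hom_{\cA_R}(P, V \otimes_R \blank)$.

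With both functors right exact and additive, it remains to check the natural map on free modules and then reduce. For $M = R$ both sides are canonically $\Hom_{\cA_R}(P,V)$ and the map is the identity; by additivity it is then an isomorphism for every free module $M = R^{(\Lambda)}$, using $V \otimes_R R^{(\Lambda)} \cong V^{(\Lambda)}$ together with $\Hom_{\cA_R}(P, V^{(\Lambda)}) \cong \Hom_{\cA_R}(P,V)^{(\Lambda)}$. For arbitrary $M$, I would choose a free presentation $R^{(\Lambda_1)} \to R^{(\Lambda_0)} \to M \to 0$ and apply the natural transformation to obtain a commutative ladder with right-exact rows whose two leftmost vertical maps are isomorphisms; the five lemma then yields the isomorphism for $M$.

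For part~\ref{item: Hom from fp proj commutes with base change}, I would specialize~\ref{item: Hom from fp proj commutes with tensor} to $M = T$, giving $\Hom_{\cA_R}(P,V) \otimes_R T \cong \Hom_{\cA_R}(P, V \otimes_R T)$, and then identify the right-hand side with $\Hom_{\cA_T}(P \otimes_R T, V \otimes_R T)$ via the adjunction of \cref{prop: tensor-Hom adjunction}, using that the underlying $\cA_R$-object of $V \otimes_R T$ is exactly $(V \otimes_R T)|_R$; naturality of both steps shows the composite is the asserted map. For part~\ref{item: Hom from fp proj preserves flatness}, given a short exact sequence $0 \to M' \to M \to M'' \to 0$, flatness of $V$ makes $0 \to V \otimes_R M' \to V \otimes_R M \to V \otimes_R M'' \to 0$ exact in $\cA_R$, projectivity of $P$ makes $\Hom_{\cA_R}(P,\blank)$ preserve this exactness, and part~\ref{item: Hom from fp proj commutes with tensor} rewrites the outcome as exactness of $0 \to \Hom_{\cA_R}(P,V) \otimes_R M' \to \Hom_{\cA_R}(P,V) \otimes_R M \to \Hom_{\cA_R}(P,V) \otimes_R M'' \to 0$, which is precisely flatness of $\Hom_{\cA_R}(P,V)$.

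The crux of the argument is entirely in part~\ref{item: Hom from fp proj commutes with tensor}, and within it the claim that $\Hom_{\cA_R}(P, V \otimes_R \blank)$ commutes with direct sums: this is exactly where finite presentability of $P$ enters, and it is the one step that genuinely uses the structure of $\cA_R$ rather than formal nonsense. Once~\ref{item: Hom from fp proj commutes with tensor} is in hand, part~\ref{item: Hom from fp proj commutes with base change} is pure adjunction and part~\ref{item: Hom from fp proj preserves flatness} is a diagram chase.
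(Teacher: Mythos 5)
Your proposal is correct and follows essentially the same route as the paper: part (i) via a free presentation of $M$, right-exactness of both sides, compatibility with direct sums coming from finite presentability of $P$, and the five lemma; part (ii) by composing (i) with the adjunction of \cref{prop: tensor-Hom adjunction}; and part (iii) by combining (i) with flatness of $V$ and left-exactness of $\Hom$. The only cosmetic difference is that you isolate the ``commutes with direct sums'' step as an explicit intermediate claim, whereas the paper absorbs it into the assertion that the two leftmost vertical maps in the ladder are isomorphisms.
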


\begin{proof}
    For part \ref{item: Hom from fp proj commutes with tensor}, choose a presentation $R^{\oplus J} \to R^{\oplus L} \to M \to 0$ of the module $M$. This induces a commutative diagram below.
\[
    \begin{tikzcd}
      \Hom_{\cA_{R}}(P, V)^{\oplus J} \ar[r] \ar[d] & \Hom_{\cA_{R}}(P, V)^{\oplus L} \ar[d] \ar[r] & \Hom_{\cA_{R}}(P, V)\otimes_{R} M \ar[d] \ar[r] & 0 \\ \Hom_{\cA_{R}}(P, V^{\oplus J})  \ar[r]  & \Hom_{\cA_{R}}(P, V^{\oplus L}) \ar[r] & \Hom_{\cA_{R}}(P, V \otimes_{R} M) \ar[r] & 0.
    \end{tikzcd}
\]
    Both rows are exact, because tensor products are right exact and $P$ is projective. Since $P$ is finitely presented, 
    the two leftmost vertical morphisms are isomorphisms, and we conclude by the five lemma that $\Hom_{\cA_{R}}(P, A) \otimes_{R} M \to \Hom_{\cA_{R}}(P, A \otimes_{R} M)$ is an isomorphism.

    This now implies part \ref{item: Hom from fp proj commutes with base change}, because we have natural morphisms
    \[ \Hom_{\cA_{R}}(P, V)\otimes _{R} T \to \Hom_{\cA_{R}}(P, V\otimes_{R} T) \to \Hom_{\cA_{T}}(P\otimes_{R}{T}, V \otimes_{R}{T}) , \]
    and the first one is an isomorphism by part 
    \ref{item: Hom from fp proj commutes with tensor},
    while the second one is an isomorphism by tensor-Hom adjunction (\cref{prop: tensor-Hom adjunction}).

    To prove part \ref{item: Hom from fp proj preserves flatness}, we choose an injective morphism $N \hookrightarrow M$ of $R$-modules and check that it remains injective after tensoring with $\Hom_{\cA_{R}}(P,V)$. We have a commutative diagram.
 \[
    \begin{tikzcd}
      \Hom_{\cA_{R}}(P, V)\otimes_{R} N \ar[r] \ar[d] & \Hom_{\cA_{R}}(P, V)\otimes_{R}M \ar[d] \\ 
      \Hom_{\cA_{R}}(P, V\otimes_{R} N)  \ar[r]  & \Hom_{\cA_{R}}(P, V\otimes_{R} M).
    \end{tikzcd}
\]
    By part \ref{item: Hom from fp proj commutes with tensor}, both vertical morphisms are isomorphisms. Since $V$ is $R$-flat, the morphism $V \otimes_{R} N \to V \otimes_{R} M$ is a monomorphism. By left exactness of $\Hom_{\cA_{R}}(\blank,\blank)$, we conclude that the bottom morphism is injective. Therefore, $\Hom_{\cA_{R}}(P, V)\otimes_{R} N  \to \Hom_{\cA_{R}}(P, V)\otimes_{R}M$ is injective.
\end{proof}

\begin{lemma} \label{lemma: homs from fp proj preserve finiteness}
Assume $\cA = \Ind (\cC)$, where $\cC$ satisfies \cref{standing assumptions on C}. Let $R$ be a $k$-algebra,
and let $P \in \cA \rfp$ be a finitely presented projective object. 
If $V \in \cA_R$ is finitely presented, then $\Hom_{\cA_{R}}(P \otimes R,V)$ is a finitely presented $R$-module.
\end{lemma}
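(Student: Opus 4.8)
The plan is to reduce the statement to the analogous computation for the projective generators $\{P_i\}_{i\in I}$ and then feed a finite presentation of $V$ through the exact functor $\Hom_{\cA_R}(P\otimes R,\blank)$.

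First I would record the base case. For each generator $P_i$, the object $P_i\otimes R$ is projective in $\cA_R$ (by \cref{prop: tensor-Hom adjunction}, as in the proof of \cref{lemma:generating set base-change}), and applying part~\ref{item: Hom from fp proj commutes with base change} of \cref{lemma:homs from projectives families} with base ring $k$ and algebra $R$ gives a natural isomorphism
\[
\Hom_{\cA_R}(P\otimes R,\,P_i\otimes R)\;\cong\;\Hom_{\cA}(P,P_i)\otimes_k R .
\]
Since $\cC$ is Hom-finite \ref{standing assumption: C Hom-finite}, the space $\Hom_{\cA}(P,P_i)$ is finite dimensional over $k$, so the right-hand side is a finite free $R$-module. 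Taking finite direct sums, $\Hom_{\cA_R}(P\otimes R,\bigoplus_{l} P_{i_l}\otimes R)$ is finite free over $R$ for any finite index set.

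Next I would produce a finite presentation of $V$ by such sums. I claim each $P_i\otimes R$ is a finitely generated object of $\cA_R$: by \cref{prop: tensor-Hom adjunction} we have $\Hom_{\cA_R}(P_i\otimes R,\blank)\cong \Hom_{\cA}(P_i,(\blank)|_k)$, and since $P_i\in\cC=\cA\rfp$ is compact in $\cA$ and the forgetful functor $|_k\colon\cA_R\to\cA$ is exact and preserves filtered colimits, the functor $\Hom_{\cA_R}(P_i\otimes R,\blank)$ commutes with filtered colimits; in particular $P_i\otimes R$ is finitely generated. Because $\{P_i\otimes R\}_{i\in I}$ generates $\cA_R$ (as in \cref{lemma:generating set base-change}) and $V$ is finitely generated, there is an epimorphism $\bigoplus_{l=1}^n P_{i_l}\otimes R\twoheadrightarrow V$ from a finite sum. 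The source is finitely generated, so finite presentation of $V$ forces its kernel $K$ to be finitely generated, and hence $K$ admits an epimorphism $\bigoplus_{j=1}^m P_{i_j}\otimes R\twoheadrightarrow K$ from a finite sum. Composing yields an exact sequence
\[
\bigoplus_{j=1}^m P_{i_j}\otimes R \longrightarrow \bigoplus_{l=1}^n P_{i_l}\otimes R \longrightarrow V \longrightarrow 0 .
\]

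Finally, applying $\Hom_{\cA_R}(P\otimes R,\blank)$, which is exact because $P\otimes R$ is projective, and using that $\Hom$ commutes with finite direct sums, I obtain an exact sequence of $R$-modules
\[
\bigoplus_{j}\Hom_{\cA_R}(P\otimes R,P_{i_j}\otimes R)\longrightarrow \bigoplus_{l}\Hom_{\cA_R}(P\otimes R,P_{i_l}\otimes R)\longrightarrow \Hom_{\cA_R}(P\otimes R,V)\longrightarrow 0 .
\]
By the base case the two left-hand modules are finite free over $R$, so $\Hom_{\cA_R}(P\otimes R,V)$ is the cokernel of a map of finite free $R$-modules, hence finitely presented. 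The step I expect to require the most care is the construction of the finite presentation for a general (possibly non-noetherian) $R$: one cannot invoke the clean noetherian conclusion of \cref{lemma:generating set base-change}, and must instead verify separately that $\{P_i\otimes R\}$ generates $\cA_R$ and that each $P_i\otimes R$ is finitely generated. The compactness argument via the adjunction $\Hom_{\cA_R}(P_i\otimes R,\blank)\cong\Hom_{\cA}(P_i,(\blank)|_k)$, together with the preservation of filtered colimits by $|_k$, is what makes this go through.
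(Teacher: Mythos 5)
Your proof is correct and follows essentially the same route as the paper's: present $V$ by finite direct sums of the $P_i\otimes R$, apply the exact functor $\Hom_{\cA_R}(P\otimes R,\blank)$, and identify the resulting terms as finite free $R$-modules via \cref{lemma:homs from projectives families}\ref{item: Hom from fp proj commutes with base change} together with Hom-finiteness. Your extra care in justifying that $\{P_i\otimes R\}_{i\in I}$ is a generating set of finitely generated objects for an arbitrary (not necessarily noetherian) $k$-algebra $R$ is a worthwhile refinement, since the paper's proof cites \cref{lemma:generating set base-change}, which is stated only for noetherian $R$.
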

\begin{proof}
    By \cref{lemma:generating set base-change} $\{P_i\otimes R\}_{i \in I}$ is a set of noetherian projective generators in $\cA_{R}$. 
    Since $V$ is finitely generated, there exists an epimorphism $\bigoplus_\ell P_\ell\otimes R \twoheadrightarrow V$ for a finite set of indices $\ell$, with each $P_\ell \in \cC$ projective. Since $V$ is finitely presented, the kernel $K$ of $\bigoplus_\ell P_\ell \otimes R \twoheadrightarrow V$ is finitely generated, and so it also admits a surjection $\bigoplus_j P_j' \otimes R$ for another finite tuple of projective objects $P'_j \in \cC$. Using that $P \otimes R$ is projective, we can apply the exact functor $\Hom_{\cA_R}(P \otimes R, \blank )$ to the presentation $\bigoplus_j P_j' \otimes R \to \bigoplus_{\ell} P_{\ell} \otimes R \to V$ in order to get an exact sequence
    \[\bigoplus_j \Hom_{\cA_R}(P \otimes R, P_j' \otimes R) \to \bigoplus_{\ell} \Hom_{\cA_R}(P \otimes R, P_{\ell} \otimes R) \to \Hom_{\cA_R}(P \otimes R, V) \to 0. \]
    By \Cref{lemma:homs from projectives families} \ref{item: Hom from fp proj commutes with base change}, we have that 
    \[\Hom_{\cA_{R}}(P\otimes R, P_j'\otimes R) \cong \Hom_{\cA}(P, P_j') \otimes R \; \; \text{ and} \;\;\Hom_{\cA_{R}}(P\otimes R, P_{\ell} \otimes R) \cong \Hom_{\cA}(P, P_{\ell}) \otimes R\]
    are finite free $R$-modules (recall that $\Hom_{\cA}(P, P_{\ell})$ and $\Hom_{\cA}(P, P_j')$ are finite $k$-vector spaces). Therefore, the exact sequence above shows that $\Hom_{\cA_{R}}(P \otimes R,V)$ is a finitely presented $R$-module.
\end{proof}

% ---
\section{The stack of objects in \texorpdfstring{$\cC$}{C}} \label{section: the stack of objects}
% ---

In this section, we use the notation $\cA = \Ind (\cC)$, where $\cC$ is nearly finite (\cref{standing assumptions on C}).
In particular, $\cA$ is strongly locally noetherian by \Cref{thm: ind-completions of Hom-finite and finite categories are strongly locally noetherian}. We fix once and for all the canonical set of projective generators $\{P_i\}_{i \in I}$ as in \Cref{defn: canonical set projective generators}.

\begin{defn}
\label{definition: stack of objects in C}
We denote by $\cM_{\cC}$ the pseudofunctor from $k\dash\Alg$ into groupoids that sends a $k$-algebra $R$ into the groupoid of $R$-flat objects of finite presentation in the abelian category $\cA_{R}$.
\end{defn}

\begin{defn}
Let $F$ denote an object in $\cA\rfp \simeq \cC$. We denote by $\Quot(F)$ the functor from $k$-$\Alg$ into sets that sends a $k$-algebra $R$ into the set of equivalence classes of quotients $F\otimes R \twoheadrightarrow V$ in $\cA_{R}$, where $V$ is $R$-flat and of finite presentation. Two such quotients are considered equivalent if they have the same kernel.
\end{defn}

\begin{prop}
For any $F \in \cC$, the functor $\Quot(F)$ is represented by a separated algebraic space locally of finite type over $k$. Moreover, it satisfies the valuative criterion of properness.
\end{prop}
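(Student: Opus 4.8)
The plan is to obtain representability from the abstract Hilbert scheme theorem of Artin--Zhang, and then to establish both separatedness and the valuative criterion of properness by direct flatness arguments over a discrete valuation ring. Throughout I will use that $F \otimes R$ is itself $R$-flat: since $k$ is a field the functor $F \otimes_k (\blank)$ is exact, and flatness is preserved under base change, so $F \otimes R = F \otimes_k R$ is $R$-flat and in particular $F \otimes R \to F \otimes K$ is a monomorphism for $R$ a domain with fraction field $K$.

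First I would verify the hypotheses of the representability theorem of \cite{artin-zhang} for $\cA = \Ind(\cC)$ and the object $F$. The object $F \in \cC$ is noetherian because $\cC$ is a finite category (\Cref{remark: finite length iff noeth and art}), and $\cA$ is strongly locally noetherian by \Cref{thm: ind-completions of Hom-finite and finite categories are strongly locally noetherian}; the remaining input, adic completeness of $\cA$, is supplied by \ref{standing assumption: C has enough projectives}, as recalled in the introduction. With these in place the Artin--Zhang construction represents $\Quot(F)$ by an algebraic space locally of finite type over $k$. Local finite type also reflects the fact that $\Quot(F)$ commutes with filtered colimits of $k$-algebras, which follows from the finite presentation of $F$ and of the generators $P_i$ together with \Cref{lemma: homs from fp proj preserve finiteness}.

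Next I would prove separatedness through the valuative criterion. Let $R$ be a discrete valuation ring with uniformizer $\pi$ and fraction field $K$, and let $F \otimes R \twoheadrightarrow V_1$ and $F \otimes R \twoheadrightarrow V_2$ be two $R$-flat quotients with kernels $K_1, K_2 \subseteq F \otimes R$ agreeing after $\otimes_R K$; it suffices to show $K_1 = K_2$. The composite $K_1 \hookrightarrow F \otimes R \twoheadrightarrow V_2$ has kernel $K_1 \cap K_2$, so $Q := K_1/(K_1 \cap K_2)$ embeds into $V_2$. Since $K_1$ and $K_2$ agree generically and localization is exact, $Q \otimes_R K = 0$; as $Q$ is noetherian, hence compact, a power $\pi^n$ annihilates $Q$. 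But $V_2$ is $R$-flat, so $\pi$ acts injectively on it, forcing $Q = 0$ and thus $K_1 \subseteq K_2$. By symmetry $K_1 = K_2$.

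Finally, for the existence part of the valuative criterion I would construct the flat extension of a given $K$-point by its schematic closure. Given a quotient $F \otimes K \twoheadrightarrow V_K$ with kernel $K_K \subseteq F \otimes K$, set $K_R := K_K \cap (F \otimes R)$, the preimage of $K_K$ under the monomorphism $F \otimes R \to F \otimes K$, and put $V := (F \otimes R)/K_R$. Because $\cA_R$ is locally noetherian, $V$ is finitely presented; the torsion argument of the previous paragraph shows $\pi$ acts injectively on $V$, so $V$ is $R$-flat; and a localization argument gives $K_R \otimes_R K = K_K$, whence $V \otimes_R K \cong V_K$. The main obstacle is the first step: matching our setting precisely to the hypotheses of the Artin--Zhang representability theorem and confirming that enough projectives supplies the adic completeness it requires. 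The two valuative criteria then rest on the equivalence, over a discrete valuation ring, between $R$-flatness and injectivity of the $\pi$-action, which should be extracted from the local flatness criterion of \cite{artin-zhang}.
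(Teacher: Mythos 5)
Your proposal is correct, and its first step coincides exactly with the paper's entire proof: the paper verifies that $\cA$ is strongly locally noetherian (\cref{thm: ind-completions of Hom-finite and finite categories are strongly locally noetherian}) and adically complete (via \ref{standing assumption: C has enough projectives} and \cite[Cor.\ D3.3]{artin-zhang}), and then cites \cite[Thm.\ E3.1, Lem.\ E3.3]{artin-zhang} for \emph{all three} conclusions --- representability, separatedness, and the valuative criterion of properness. Where you diverge is that you only use Artin--Zhang for representability and then re-derive separatedness and the existence part of the valuative criterion by hand over a DVR: the torsion argument ($Q \otimes_R K = 0$ plus finite generation forces $\pi^n Q = 0$, which is incompatible with embedding into a flat object) for uniqueness, and saturation of the kernel under $F \otimes R \hookrightarrow F \otimes K$ for existence. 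These are essentially the proofs of E3.1/E3.3 specialized to the present setting, and they are sound provided you import the abstract local flatness criterion from \cite{artin-zhang} (over a DVR, $R$-flat is equivalent to $\pi$ acting injectively), as you acknowledge; the one step worth spelling out is that ``a power of $\pi$ annihilates $Q$'' uses exactness of filtered colimits in the Grothendieck category $\cA_R$ to identify $\ker(Q \to Q \otimes_R K)$ with $\colim_n \ker(\pi^n \colon Q \to Q)$, whereupon finite generation gives stabilization. The trade-off: the paper's proof is a two-line verification of hypotheses and defers all content to the reference, while yours duplicates some of that reference but makes transparent the mechanism behind separatedness and properness (flat $=$ torsion-free over a DVR, elementary divisors of the kernel), which is genuinely instructive.
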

\begin{proof}
It suffices to show that the $k$-linear abelian category $\cA$ satisfies the hypotheses of \cite[Thm. E3.1]{artin-zhang}. The category $\cA$ is strongly locally noetherian by \cref{thm: ind-completions of Hom-finite and finite categories are strongly locally noetherian}, and adically complete by 
\cite[Cor. D3.3]{artin-zhang}. It is Ext-finite by the existence of projective resolutions by objects in $\cA\rfp = \cC$ and \Cref{lemma: homs from fp proj preserve finiteness}. The other assumptions needed are implied directly by \cref{standing assumptions on C}. The valuative criterion for properness follows from \cite[Lem. E3.3]{artin-zhang}.
\end{proof} 

\begin{defn}
For all $F \in \cC$, we let $u_{F}: \Quot(F) \to \cM_{\cC}$ denote the morphism of functors that for every $k$-algebra $R$ sends an equivalence class of quotients $[q \colon F \otimes R \twoheadrightarrow V]$ to the object $u_{F}([q]) := F\otimes R / \ker(q)$.
\end{defn}

Recall that $I$ was the indexing set for canonical projective generators $\{P_i\}_{i \in I}$.
\begin{notn}
For every tuple of nonnegative integers $\vec{n} = (n_i)_{i \in J}$ indexed by a finite subset $J \subset I$, we set $P_{\vec{n}} = \bigoplus_{i \in J} P_i^{\oplus n_i}$
\end{notn}

Consider the morphism 
$\bigsqcup_{\vec{n}}u_{P_{\vec{n}}} \colon \bigsqcup_{\vec{n}} \Quot(P_{\vec{n}}) \to \cM_{\cC}$, 
where the disjoint union runs over all tuples $\vec{n}$ of nonnegative integers indexed by finite subsets $J \subset I$ as above. 

\begin{lemma} \label{lemma:quot atlas}
The morphism of functors $\bigsqcup_{\vec{n}}u_{P_{\vec{n}}} \colon \bigsqcup_{\vec{n}} \Quot(P_{\vec{n}}) \to \cM_{\cC}$ is sche\-ma\-tic, smooth and surjective. Moreover, for each fixed $\vec{n}$ the morphism $u_{P_{\vec{n}}}\colon \Quot(P_{\vec{n}}) \to \cM_{\cC}$ is quasi-affine.
\end{lemma}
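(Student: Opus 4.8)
The plan is to reduce the four properties to a fiberwise computation: since $\Quot(P_{\vec n})$ is a sheaf of sets, all four assertions can be checked after base change along an arbitrary map $T \to \cM_\cC$, and by a standard limit argument (finitely presented objects descend to finite type subalgebras) we may assume $T$ is a noetherian $k$-algebra, with $V \in \cA_T$ the $T$-flat finitely presented object it classifies. First I would identify the fibered product $\Quot(P_{\vec n}) \times_{\cM_\cC} T$. Unwinding definitions, an $R$-point (for a $T$-algebra $R$) is a quotient $[q\colon P_{\vec n}\otimes R \twoheadrightarrow W]$ together with an isomorphism $u_{P_{\vec n}}([q]) = W \xrightarrow{\sim} V\otimes_T R$ in $\cA_R$; composing $q$ with this isomorphism gives a bijection between such data and epimorphisms $P_{\vec n}\otimes R \twoheadrightarrow V\otimes_T R$. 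Thus the fibered product represents the functor $R \mapsto \{\text{epimorphisms } P_{\vec n}\otimes R \twoheadrightarrow V\otimes_T R\}$.

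Next I would represent the ambient $\Hom$-functor as a vector bundle. By \Cref{lemma:homs from projectives families}\ref{item: Hom from fp proj commutes with base change}, the functor $R \mapsto \Hom_{\cA_R}(P_{\vec n}\otimes R, V\otimes_T R)$ is naturally isomorphic to $R \mapsto M\otimes_T R$, where $M := \Hom_{\cA_T}(P_{\vec n}\otimes T, V)$. By \Cref{lemma: homs from fp proj preserve finiteness} the module $M$ is finitely presented, and by \Cref{lemma:homs from projectives families}\ref{item: Hom from fp proj preserves flatness} it is $T$-flat, hence finite locally free. Therefore the $\Hom$-functor is represented by the total space $\mathbb V := \uSpec_T \Sym_T(M^\vee)$ of the associated vector bundle, which is affine and smooth over $T$.

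Then I would cut out the epimorphism locus. Setting $B := \Sym_T(M^\vee)$, the tautological element of $M\otimes_T B \cong \Hom_{\cA_B}(P_{\vec n}\otimes B, V\otimes_T B)$ gives a universal homomorphism $\mathbf e\colon P_{\vec n}\otimes B \to V\otimes_T B$; its cokernel $Q := \coker(\mathbf e)$ is finitely presented, hence noetherian since $B$ is noetherian. As base change is right exact, $Q\otimes_B R = \coker(\mathbf e \otimes_B R)$, so a point of $\mathbb V$ lies in the epimorphism locus exactly when $Q$ pulls back to $0$. By \Cref{lemma:nakayama abelian cats} this locus is an open subscheme $U \subset \mathbb V$. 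Hence $\Quot(P_{\vec n})\times_{\cM_\cC} T \cong U$ is a scheme, quasi-affine over $T$ (open in an affine scheme) and smooth over $T$ (open in a smooth scheme). Since $T$ was arbitrary this shows $u_{P_{\vec n}}$ is schematic, smooth and quasi-affine, and the disjoint union $\bigsqcup_{\vec n} u_{P_{\vec n}}$ is then schematic and smooth, disjoint unions of schemes being schemes.

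For surjectivity, given $t \in \Spec T$ the fiber $V\otimes_T \kappa(t)$ is finitely generated in the finite category $(\cA_{\kappa(t)})\rfp$ by \Cref{corollary: base change to fields yields hom-finite and finite categories}, so by the generators of \Cref{lemma:generating set base-change} it is a quotient of some $P_{\vec n}\otimes \kappa(t)$; this epimorphism is a $\kappa(t)$-point of $\mathbb V$, and since the epimorphism locus is open it extends over a neighborhood of $t$, placing $t$ in the image. The main obstacle I would flag is exactly the representability-and-openness core of the third step: verifying that $M$ is finite locally free (so that the $\Hom$-functor really is a vector bundle) and that the universal cokernel $Q$ is a finitely presented, hence noetherian, object of $\cA_B$ so that \Cref{lemma:nakayama abelian cats} applies; this, together with the reduction to a noetherian base required by that lemma, is the delicate bookkeeping on which everything else rests.
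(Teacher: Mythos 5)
Your proposal is correct and follows essentially the same route as the paper's proof: reduce to a noetherian/finite-type base by a limit argument, identify the fiber product with the epimorphism locus inside the total space of the vector bundle $\Hom_{\cA_T}(P_{\vec n}\otimes T, V)$ (via \cref{lemma:homs from projectives families} and \cref{lemma: homs from fp proj preserve finiteness}), cut out that locus as an open subscheme using the universal cokernel and Nakayama's lemma (\cref{lemma:nakayama abelian cats}), and deduce surjectivity from the fact that the $P_i$ generate. The only cosmetic difference is that you check surjectivity fiberwise over all points of $\Spec T$, whereas the paper reduces to $k$-points using smoothness and local finite type over the algebraically closed field $k$; both are valid.
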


Before we prove this lemma, it might be helpful to have a toy example in mind.
In the case when $\cC$ is the category of finite-dimensional vector spaces over $k$, we have $\Kzero (\cC) = \Gzero (\cC) \cong \bZ$, and then the tuple $\vec n$ is just one natural number $n \in \bN$, which corresponds to the projective object $P_n = k^{\oplus n}$.
The quot scheme $\Quot(k^{\oplus n})$ is a disjoint union $\Quot(k^{\oplus n}) = \sqcup_{\ell \geq 0} \Gr(n,\ell)$ of grassmannians, and we have $\cM_{\cC} = \sqcup_{\ell \geq 0} B\!\GL_{\ell}$.  
Then the statement of the lemma specializes to the claim that for each $\ell$, the morphism $\Gr(n,\ell) \to B\!\GL_\ell$ is quasi-affine.

\begin{proof}
Let $R$ be a $k$-algebra, and choose a morphism $\Spec(R) \to \cM_{\cC}$ corresponding to an element $V \in (\cA_{R})^{\text{fp}}$. We want to show that $\Spec(R) \times_{\cM_{\cC}} (\bigsqcup_{\vec{n}} \Quot(P_{\vec{n}})) \to \Spec(R)$ is represented by a smooth surjective morphism of schemes, with each $\Spec(R) \times_{\cM_{\cC}} \Quot(P_{\vec{n}})$ quasi-affine. Since both functors $\bigsqcup_{\vec{n}} \Quot(P_{\vec{n}})$ and $\cM_{\cC}$ commute with filtered colimits (cf. the proof of \cite[E.3.4]{artin-zhang}), it suffices to check this when $R$ is a finite type $k$-algebra.

We start by showing that $\Spec(R) \times_{\cM_{\cC}} \Quot(P_{\vec{n}})$ is represented by a quasi-affine smooth $R$-scheme for any fixed $\vec{n}$. For every $R$-algebra $S$, the set of $S$-points of $\Spec(R) \times_{\cM_{\cC}} \Quot(P_{\vec{n}})$ is the set of epimorphisms $P_{\vec{n}} \otimes S \twoheadrightarrow V \otimes_{R} S$. Consider the functor $X$ that sends an $R$-algebra $S$ to the set $\Hom_{\cA_{S}}(P_{\vec{n}} \otimes S, V \otimes_{R} S)$. We conclude the proof of quasi-affineness and smoothness by showing the following.
\begin{enumerate}[label=(\arabic*)]
    \item $X$ is represented by an affine smooth $R$-scheme.
    \item The natural monomorphism of functors $\Spec(R) \times_{\cM_{\cC}} \Quot(P_{\vec{n}}) \hookrightarrow X$ is represented by an open immersion.
\end{enumerate}
For (1), notice that $\Hom_{\cA_{R}}(P \otimes R, V)$ is a vector bundle on $\Spec(R)$ by \cref{lemma:homs from projectives families} \ref{item: Hom from fp proj preserves flatness} and \Cref{lemma: homs from fp proj preserve finiteness}. Moreover, by \cref{lemma:homs from projectives families}\ref{item: Hom from fp proj commutes with base change}, for any $R$-algebra $S$ we have $X(S) = \Hom_{\cA_{R}}(P \otimes R, V) \otimes_{R} S$. Therefore, the functor $X$ is represented by the total space of the vector bundle $\Hom_{\cA_{R}}(P \otimes R, V)$, which is affine and smooth over $R$. For (2), consider the universal homomorphism $\psi: P \otimes \cO_X \to V \otimes_{R} \cO_X$, and denote by $\cQ$ the cokernel of $\psi$. The subfunctor $\Spec(R) \times_{\cM_{\cC}} \Quot(P_{\vec{n}})$ consists of the $S$-points of $X$ such that the pullback of $\psi$ is surjective. In other words, $\Spec(R) \times_{\cM_{\cC}} \Quot(P_{\vec{n}})$ is represented by the $S$-points of $X$ such that the pullback of $\cQ$ is $0$. By Nakayama's lemma (\cref{lemma:nakayama abelian cats}), this is represented by an open subscheme of $X$.

We are left to prove surjectivity. Since $\Spec(R) \times_{\cM_{\cC}} (\bigsqcup_{\vec{n}} \Quot(P_{\vec{n}})) \to \Spec(R)$ is a smooth morphism of schemes locally of finite type over $k$, it suffices to check surjectivity on $k$-points. This amounts to showing that for every $V \in \cC$ there exists some $\vec{n}$ and a surjection $P_{\vec{n}} \twoheadrightarrow V$, which holds because $\{P_i\}_{i \in I}$ is a set of generators.
\end{proof}

\begin{prop}
The pseudofunctor $\cM_{\cC}$ is represented by an algebraic stack locally of finite type over $k$.
\end{prop}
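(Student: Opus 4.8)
The plan is to realize $\cM_{\cC}$ as the quotient stack of a smooth groupoid in algebraic spaces built from the atlas of \cref{lemma:quot atlas}. First I would verify that $\cM_{\cC}$ is a stack in groupoids for the fppf topology; then, since \cref{lemma:quot atlas} already supplies a schematic, smooth, surjective atlas, the rest is the formal passage from a representable smooth surjective cover to an algebraic stack via the standard groupoid-presentation criterion.

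For the stack property the two things to check are descent of isomorphisms and effective descent of objects, both of which reduce to faithfully flat descent. I would organize this through the comodule description of \cref{proposition: Hom-finite and finite cat is fdcomod}: writing $\cA \simeq H\dash\Comod$, the base change $\cA_R$ is identified with comodules carrying a compatible $R$-action, and the forgetful functor to $R\dash\Mod$ is faithful and exact. An object of $\cM_{\cC}(R)$ is an $R$-flat, finitely presented object, and its flatness and finite presentation are reflected by the underlying module. Thus, given a faithfully flat map $R \to R'$ and a descent datum, one descends the underlying $R'$-module by ordinary faithfully flat descent of modules, descends the ($R'$-linear) coaction morphism by the same token, and uses that $R$-flatness and finite presentation are fppf-local on the base; the glued object lands back in $\cM_{\cC}(R)$. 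This establishes that $\cM_{\cC}$ is a stack.

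Next, set $U = \bigsqcup_{\vec n}\Quot(P_{\vec n})$, which is an algebraic space locally of finite type over $k$, being a disjoint union of the separated algebraic spaces produced by the Quot representability result. By \cref{lemma:quot atlas} the map $f = \bigsqcup_{\vec n} u_{P_{\vec n}}\colon U \to \cM_{\cC}$ is schematic, smooth and surjective. Because $f$ is representable, the fiber product $R := U \times_{\cM_{\cC}} U$ is an algebraic space; the two projections $s,t\colon R \to U$ are smooth, being base changes of $f$, and surjective, and the stack structure of $\cM_{\cC}$ makes $(U,R,s,t)$ into a groupoid in algebraic spaces. Since $f$ is smooth and surjective and $\cM_{\cC}$ is a stack, $\cM_{\cC}$ is equivalent to the quotient stack $[U/R]$, which is an algebraic stack by the criterion for smooth groupoids in algebraic spaces \cite{stacks-project}. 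Its diagonal is representable by algebraic spaces because the base change of $\Delta_{\cM_{\cC}}$ along the smooth surjection $U \times U \to \cM_{\cC}\times \cM_{\cC}$ is the morphism of algebraic spaces $R \to U \times U$, and representability descends along fppf covers. Finally $\cM_{\cC}$ is locally of finite type over $k$ because it admits the atlas $U$, which has this property.

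The main obstacle is the stack/descent step: the assembly from the atlas to an algebraic stack is formal once representability of $f$ is available, all of which is furnished by \cref{lemma:quot atlas}, whereas effective fppf descent of $R$-flat finitely presented objects of $\cA_R$ is the one place requiring genuine categorical input. It is cleanest to run that argument through the comodule/module comparison rather than inside the abstract abelian category $\cA_R$, where flatness and finite presentation are harder to track directly.
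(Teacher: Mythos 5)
Your overall skeleton matches the paper's: establish that $\cM_{\cC}$ is a stack for the fppf topology, then combine this with the atlas of \cref{lemma:quot atlas} and the standard criterion to conclude algebraicity. The difference is in how the stack property is obtained. The paper simply cites Artin--Zhang's general descent result \cite[C8.6]{artin-zhang}, which asserts fppf descent for flat families in $\cA_R$ for any strongly locally noetherian $\cA$, and then invokes \cite[Tag 06DB]{stacks-project}; you instead re-prove descent by hand using the comodule realization of \cref{proposition: Hom-finite and finite cat is fdcomod}, reducing everything to ordinary faithfully flat descent of modules. Your route is more self-contained and concretely exploits the standing assumptions, but it silently relies on two compatibilities that deserve justification: that $(H\dash\Comod)_R$ is identified with comodules over $H\otimes R$ in $R$-modules for an \emph{arbitrary} commutative $k$-algebra $R$ (the paper only records this for field extensions in \cref{corollary: base change to fields yields hom-finite and finite categories}), and that Artin--Zhang $R$-flatness and finite presentation of $V\in\cA_R$ are detected by the underlying $R$-module via the forgetful functor. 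Both are true here because the forgetful functor is exact, faithful, and compatible with $\otimes_R$, but they are genuine checks, not formalities. The paper's citation-based route buys brevity and generality (it does not use the comodule description at all); your route buys transparency at the cost of these verifications. Your spelled-out groupoid presentation $[U/R]$ is a correct, if more verbose, substitute for the paper's appeal to \cite[Tag 06DB]{stacks-project}.
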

\begin{proof}
By \cite[C8.6]{artin-zhang}, the pseudofunctor $\cM_{\cC}$ is a stack in the fppf topology. Since we have exhibited an algebraic space atlas locally of finite type over $k$ (\Cref{lemma:quot atlas}), it follows that $\cM_{\cC}$ is an algebraic stack locally of finite type over $k$ \cite[\href{https://stacks.math.columbia.edu/tag/06DB}{Tag 06DB}]{stacks-project}.
\end{proof}

\begin{defn}
For any object in $V \in \cC$, we say that $V$ has G-theory class $\alpha$ if $[V] = \alpha$ in $\Gzero(\cC)$.
\end{defn}

\begin{prop}
\label{proposition:M_alpha closed}
For every effective G-theory class $\alpha$, the subset of $k$-points of $\cM_{\cC}$ with G-theory class $\alpha$ are exactly the $k$-points of an open and closed substack $(\cM_{\cC})_{\alpha} \subset \cM_{\cC}$.
\end{prop}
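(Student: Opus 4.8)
The plan is to produce, for each simple class $i \in I$, a locally constant integer-valued function $r_i$ on $|\cM_\cC|$ recording the rank of $\Hom(P_i \otimes R, V)$ in a family, and then to assemble these into a locally constant G-theory-class function whose fibers are the desired open and closed substacks.

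First I would show that for a family $V \in (\cA_R)\rfp$ that is $R$-flat (i.e. a map $\Spec R \to \cM_\cC$) and each $i \in I$, the $R$-module $H_i := \Hom_{\cA_R}(P_i \otimes R, V)$ is finitely presented by \Cref{lemma: homs from fp proj preserve finiteness} and $R$-flat by \Cref{lemma:homs from projectives families}\ref{item: Hom from fp proj preserves flatness}. A finitely presented flat module over any commutative ring is finitely generated projective, hence locally free of locally constant rank; I record this rank as a function $r_i$. By \Cref{lemma:homs from projectives families}\ref{item: Hom from fp proj commutes with base change} these modules commute with base change, so the $r_i$ are compatible with pullback and descend to locally constant functions on $|\cM_\cC|$, whose value at a point $x$ over a field $K$ is $\dim_K \Hom(P_i \otimes K, V_x)$.

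Next I would check that at each point the tuple $(r_i(x))_{i \in I}$ has finite support: by \Cref{corollary: base change to fields yields hom-finite and finite categories} the fiber $V_x$ lies in a Hom-finite and finite category, so it has finite length and only finitely many $r_i(x)$ are nonzero. Since $\cM_\cC$ is locally of finite type over $k$, to test that a subset is open and closed it suffices to pull back along the atlas $\bigsqcup_{\vec n}\Quot(P_{\vec n})$ of \Cref{lemma:quot atlas} and restrict to finite-type affine charts $\Spec R$; such $R$ is Noetherian, so $\Spec R$ has finitely many connected components, each open and closed, on each of which every $r_i$ is constant and only finitely many are nonzero. I would then define $(\cM_\cC)_\alpha$ to be the locus cut out by $r_i = m_i\tau_i$ for all $i$, where $\alpha = \sum_i m_i[S_i]$ and the $\tau_i$ are as in \Cref{lemma: dual bases}\ref{item: dual bases}. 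On each connected component this condition either holds identically or fails identically, because only finitely many of the conditions are nontrivial there and each $r_i$ is constant; hence $(\cM_\cC)_\alpha$ is a union of connected components, i.e. an open and closed substack, and the intrinsic, base-change-compatible definition of the $r_i$ guarantees these loci glue across charts.

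Finally I would identify the $k$-points. For $V \in \cC$ with $[V] = \sum_i m_i[S_i]$, projectivity of $P_i$ gives $\Ext^{>0}(P_i, V) = 0$, so $r_i(V) = \dim_k\Hom(P_i, V) = \langle P_i, V\rangle = \sum_j m_j\langle P_i, S_j\rangle = m_i\tau_i$ by \Cref{lemma: dual bases}\ref{item: dual bases}; since $\tau_i \geq 1$, the tuple $(r_i(V))_i$ both determines and is determined by $\alpha$, whence $V$ is a $k$-point of $(\cM_\cC)_\alpha$ if and only if $[V] = \alpha$. I expect the main obstacle to be the bookkeeping forced by the possibly infinite index set $I$: a priori one is intersecting infinitely many open-closed conditions, which need not stay open. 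The key that resolves this is the finiteness of length at each point, which makes the support of $(r_i)$ locally finite, together with the reduction to Noetherian charts, which collapses each fiber to a finite union of connected components.
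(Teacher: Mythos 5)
Your proposal is correct and follows essentially the same route as the paper: both reduce to a finite-type affine chart, use \Cref{lemma:homs from projectives families} and \Cref{lemma: homs from fp proj preserve finiteness} to make each $\Hom_{\cA_R}(P_i\otimes R,V)$ a vector bundle of locally constant rank, invoke the dual-bases/nondegeneracy statement of \Cref{lemma: dual bases} to recover the class $\alpha$ from these ranks, and handle the a priori infinite intersection via the finitely many connected components of a Noetherian chart (your finite-support observation is a minor variant of the paper's stabilization argument).
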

\begin{proof}
Let $R$ be a finite type $k$-algebra and let $V$ be a $R$-flat noetherian object in $\cA_{R}$, corresponding to a point $\Spec(R) \to \cM_{\cC}$. Define $R(k)_{\alpha} \subset \Spec(R)(k)$ by
\[ R(k)_{\alpha} := \left\{ f \colon \Spec(k) \to \Spec(R) \; \lvert \; \text{$f^*(V)$ has G-theory class $\alpha$} \right\}
.\]

We need to show that there exists a (unique) open and closed subscheme $U \subset \Spec(R)$ with $U(k) = R(k)_{\alpha}$. For each $i \in I$, we can similarly define
\[ R(k)^i_{\alpha} := \left\{ f \colon \Spec(k) \to \Spec(R) \; \lvert \; \text{$\langle P_i, f^*(V)\rangle = \langle P_i, \alpha\rangle$} \right\}. \]
We claim that for all $i \in I$ there is a (unique) open and closed subscheme $U^i$ with $U^i(k) = R(k)^i_{\alpha}$. By \cref{lemma:homs from projectives families} \ref{item: Hom from fp proj preserves flatness} and \Cref{lemma: homs from fp proj preserve finiteness}, the module $\Hom_{\cA_{R}}(P_i, V)$ is a vector bundle on $\Spec(R)$, and by \cref{lemma:homs from projectives families} \ref{item: Hom from fp proj commutes with base change} the set $R(k)^i_{\alpha}$ consists of those closed points in $\Spec(R)(k)$ such that the fiber of the vector bundle $\Hom_{\cA_{R}}(P_i, V)$ has dimension $\langle P_i, \alpha\rangle$. This is the set of closed points of a closed and open subscheme of $\Spec(R)$, concluding the proof of the claim.
Now, we have $R(k)_{\alpha} = \bigcap_{i \in I} R(k)^i_{\alpha}$ by \Cref{lemma: dual bases}(ii). We have shown that each $R(k)^i_{\alpha}$ is the set of $k$-points of the union $U_i$ of some connected components of $\Spec(R)$. Since $R$ is of finite type over $k$, it has finitely many open and closed connected components. In particular, the intersection $R(k)_{\alpha} = \bigcap_{i \in I} R(k)^i_{\alpha}$ eventually stabilizes, thus showing that $R(k)_{\alpha}$ is the set of $k$-points of the closed and open subscheme $U_{i_1} \cap U_{i_2} \cap \ldots \cap U_{i_l}$ for some finite set of indexes $i_1, i_2, \ldots, i_l \in I$.
\end{proof}

\begin{lemma} \label{lemma:properties of the stack}
The algebraic stack $\cM_{\cC}$ satisfies the following properties.
\begin{enumerate}
    \item It has affine diagonal.
    \item It is $\Theta$-reductive and $S$-complete.
    \item It satisfies the existence part of the valuative criterion for properness.
    \item A point $V \in |\cM_{\cC}|$ is closed if and only if $V$ is semisimple.
\end{enumerate}
\end{lemma}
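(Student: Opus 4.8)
The plan is to treat the four properties in turn, with (i) and (iii) being essentially formal consequences of the representability results already proved, (iv) following from the degeneration of an object to its semisimplification, and (ii) being the technical heart of the argument.

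For (i), I would analyze the diagonal through the functor $\uIsom_{\cA_R}(V,W)$ attached to two $R$-points $V,W\in(\cA_R)\rfp$ (both $R$-flat and finitely presented). First I would show that $\uHom_{\cA_R}(V,W)\colon S\mapsto \Hom_{\cA_S}(V\otimes_R S,\,W\otimes_R S)$ is representable by an affine $R$-scheme: choosing a presentation $P_1\to P_0\to V\to 0$ by finite sums of the base-changed generators $\{P_i\otimes R\}$ (\cref{lemma:generating set base-change}) and applying $\Hom_{\cA_R}(\blank,W)$ exhibits $\uHom_{\cA_R}(V,W)$ as the kernel of a map of the vector bundles $\uHom_{\cA_R}(P_0,W)$, $\uHom_{\cA_R}(P_1,W)$ — these are vector bundles by \cref{lemma:homs from projectives families}\ref{item: Hom from fp proj preserves flatness} and \cref{lemma: homs from fp proj preserve finiteness} — hence as a closed subscheme of the total space of an affine bundle. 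Then $\uIsom_{\cA_R}(V,W)$ is the closed subscheme of $\uHom_{\cA_R}(V,W)\times_R\uHom_{\cA_R}(W,V)$ cut out by the equations $fg=\mathrm{id}$ and $gf=\mathrm{id}$, so it is affine, giving affine diagonal. For (iii), given a DVR $R$ with fraction field $K$ and a $K$-point $V_K\in(\cA_K)\rfp$, I would first lift it to $\Quot$: since $\{P_i\otimes K\}$ generate $\cA_K$ (\cref{lemma:generating set base-change}) and $V_K$ is finitely generated, there is a surjection $P_{\vec n}\otimes K\twoheadrightarrow V_K$, i.e.\ a $K$-point of $\Quot(P_{\vec n})$. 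As $\Quot(P_{\vec n})$ is separated and satisfies the valuative criterion of properness, after a possible finite extension of $R$ this extends to $P_{\vec n}\otimes R\twoheadrightarrow V_R$ with $V_R$ an $R$-flat finitely presented object; composing with $u_{P_{\vec n}}$ produces the desired extension $\Spec(R)\to\cM_\cC$.

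Part (ii) is where the real work lies. Following \cite{alper2019existence}, maps $\Theta_R\to\cM_\cC$ correspond, via the Rees construction, to exhaustive separated $\bZ$-indexed descending filtrations $\{F^pV\}$ of an $R$-flat finitely presented object whose subquotients are again $R$-flat and finitely presented, and maps $\overline{\ST}_R\to\cM_\cC$ correspond to the analogous datum interpolating two lattices of $V_K$ with a common associated graded. For $\Theta$-reductivity I would start from a filtration of the generic fibre $V_K$ together with an $R$-flat model $V_R$ and take the \emph{saturation} $F^pV_R:=\ker\!\big(V_R\to V_K/F^pV_K\big)$: each quotient $V_R/F^pV_R$ embeds into the $K$-object $V_K/F^pV_K$, hence has no $\pi$-torsion and is therefore $R$-flat (over a DVR a finitely presented object is flat iff the uniformizer acts injectively, cf.\ \cite{artin-zhang}), while the subquotients $F^pV_R/F^{p+1}V_R$ embed into the flat object $V_R/F^{p+1}V_R$ and so are flat as well, with finite presentation coming from noetherianity (\cref{thm: ind-completions of Hom-finite and finite categories are strongly locally noetherian}). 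This produces the unique extension of the filtration, giving $\Theta$-reductivity; the same saturation construction, applied to the interpolation datum, yields $S$-completeness. \textbf{The main obstacle is exactly this step}: pinning down the precise filtered-object description of $\on{Map}(\Theta_R,\cM_\cC)$ and $\on{Map}(\overline{\ST}_R,\cM_\cC)$ inside $\cA_R$, and verifying that saturation keeps one inside the subcategory of $R$-flat finitely presented objects with flat finitely presented subquotients.

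Finally, for (iv) I would argue both directions through the semisimplification. If $V$ is not semisimple, a Jordan--H\"older filtration (\cref{remark: gr V is indep of JH filtrations}) gives a map $\Theta\to\cM_\cC$ with generic fibre $V$ and special fibre $\gr V$, and $V\not\cong\gr V$; hence $V$ specialises to a distinct point and is not closed. Conversely, if $V$ is semisimple and admits any specialisation $V\rightsquigarrow W$, then $W$ lies in the same open-and-closed component $(\cM_\cC)_{[V]}$, so $[\,\gr W\,]=[W]=[V]$ by \cref{proposition:M_alpha closed}; since a semisimple object is determined by its class through the basis $\{[S_i]\}$ of $\Gzero(\cC)$ (\cref{lemma: dual bases}\ref{item: dual bases}), we get $\gr W\cong V$, and the uniqueness of limits furnished by the $S$-completeness and $\Theta$-reductivity of part (ii) (equivalently, upper semicontinuity of $\dim_k\End$) forces $W\cong V$. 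Thus the closed points are precisely the fixed points of $V\mapsto\gr V$, namely the semisimple objects.
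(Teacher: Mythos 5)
Your proposal is correct in substance, but it takes a very different route from the paper: the paper's entire proof is a citation, deducing (i) from \cite[Lem.~7.20]{alper2019existence}, (ii)--(iii) from \cite[Lem.~7.16, 7.17, 7.18]{alper2019existence}, and (iv) from \cite[Lem.~7.19]{alper2019existence} (together with the observation that strong local noetherianity of $\cA$ removes the restriction to DVRs essentially of finite type over $k$). What you have written is essentially a reconstruction of the proofs of those five lemmas: the closed embedding $\uIsom(V,W)\hookrightarrow\uHom(V,W)\times\uHom(W,V)$ with $\uHom$ realized as a kernel of a map of vector bundles is exactly the AHLH argument for (i); lifting a $K$-point to $\Quot(P_{\vec n})$ and invoking the valuative criterion for the Quot space is their argument for (iii); the saturation $F^pV_R=\ker(V_R\to V_K/F^pV_K)$ together with the torsion-free-implies-flat criterion over a DVR is their argument for (ii); and the Jordan--H\"older degeneration $V\rightsquigarrow\gr V$ is the heart of (iv). The obstacle you flag in (ii) --- identifying $\on{Map}(\Theta_R,\cM_{\cC})$ and $\on{Map}(\overline{\ST}_R,\cM_{\cC})$ with filtered objects of $(\cA_R)\rfp$ with flat, finitely presented subquotients --- is a genuine gap in a self-contained write-up, but it is precisely \cite[Prop.~7.12, Lem.~7.13--7.15]{alper2019existence}, which this paper already uses as a black box (e.g.\ in the proof of \cref{thm: theta stratification and moduli space stability condition}), so it does not invalidate the approach. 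Two smaller points: in (iv) your converse can be streamlined --- from $V\rightsquigarrow W\rightsquigarrow\gr W\cong V$ and soberness of $|\cM_{\cC}|$ (which follows from quasi-separatedness, i.e.\ part (i)) one gets $W=V$ directly, without invoking uniqueness of limits or semicontinuity of $\dim_k\End$; and in (iii) you should note, as the paper does, that strong local noetherianity is what lets you run the argument for arbitrary DVRs rather than only those essentially of finite type over $k$. The trade-off is clear: the paper's proof is shorter and leans entirely on \cite{alper2019existence}, while yours makes visible exactly which structural facts about $\cA_R$ (vector-bundle Homs, Nakayama, saturation over DVRs) are being consumed.
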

\begin{proof}\quad
Part (i) follows from \cite[Lem. 7.20]{alper2019existence}. Parts (ii)---(iii) follow from \cite[Lem. 7.16, 7.17, 7.18]{alper2019existence} (note that in our case we don't need to restrict to DVRs that are essentially of finite type over $k$, because $\cA$ is strongly locally noetherian). Part (iv) follows from \cite[Lem. 7.19]{alper2019existence}.
\end{proof}

\begin{prop}
\label{proposition: MCalpha is connected and qc} \label{prop:irreducible components are quasicompact}
    For every effective G-theory class $\alpha$, the open and closed substack $(\cM_\cC)_\alpha \subset \cM_\cC$ is connected and quasi-compact.
\end{prop}
\begin{proof}
    Let $A$ be the unique semisimple object of class $\alpha$.
    We will show that every object $V$ from $(\cM_\cC)_\alpha$ is in the same connected component as $A$.
    Indeed, if $0 = V_0 \subsetneq V_1 \subsetneq \ldots \subsetneq  V$ denotes the Jordan-H\"older filtration, then a choice of weights induces a morphism $\Theta_k := \mathbb{A}^1_k/\mathbb{G}_{m,k} \to (\cM_{\cC})_{\alpha}$ which sends $1$ to $[V]$ and send $0$ to the associated graded $[\bigoplus_{i=0}^{n-1} V_{i+1}/V_i]$ (see \cite[Cor. 7.13]{alper2019existence}). Since $\bigoplus_{i=0}^{n-1} V_{i+1}/V_i$ is isomorphic to the semisimple object $A$ (\cref{remark: gr V is indep of JH filtrations}), it follows that $V$ and $A$ lie on the same connected component of $(\cM_{\cC})_{\alpha}$.

    To show that $(\cM_\cC)_\alpha$ is quasi-compact, we pick any quasi-compact open substack $\cU \subset (\cM_\cC)_\alpha$ that contains the semisimple object $A$.
    We will show that $\cU = (\cM_\cC)_\alpha$ by proving that $\cU$ contains all $k$-points. Indeed, take any $k$-point of $(\cM)_{\alpha}(k)$ corresponding to an element $V$. Let $f:\Theta_k \to (\cM)_{\alpha}$ be a H\"older degeneration $f:\Theta_k \to (\cM)_{\alpha}$ as in the previous paragraph. Then the preimage $f^{-1}(\cU)$ is an open substack of $\Theta_k$ containing $0$ (since $A \in \cU(k)$). This forces $f^{-1}(\cU) = \Theta_k$, and hence we conclude that $[V] = f(1)$ belongs to $\cU$, as desired.
\end{proof}

% ---
\section{Moduli spaces}
% ---

For this section, we keep the same notation and assumptions as in \Cref{section: the stack of objects}. We assume in addition that the characteristic of $k$ is $0$.

% -
\subsection{Good moduli space for \texorpdfstring{$\cM_{\cC}$}{MC}}
% -

\begin{lemma} \label{lemma: irreducible components admits gms}
Let $\alpha$ be an effective G-theory class, then $(\cM_\cC)_\alpha$ admits a good moduli space $(M_\cC)_\alpha$ which is proper over $k$.
\end{lemma}
\begin{proof}
We use \cite[Thm. A]{alper2019existence}. The base field $k$ has characteristic $0$, and $(\cM_\cC)_\alpha$ is of finite type (\cref{prop:irreducible components are quasicompact}), $\Theta$-reductive, S-complete (\cref{lemma:properties of the stack} (ii)), and satisfies the existence part of the valuative criterion for properness (\cref{lemma:properties of the stack} (iii)).
\end{proof}

We denote by $M_{\cC}:= \bigsqcup_{\alpha \in \Geff(\cC)} (M_\cC)_\alpha$ the algebraic space given by the disjoint union of $(M_\cC)_\alpha$.

\begin{thm} \label{thm: good moduli space whole stack} With notation as above:
\begin{enumerate}
    \item 
    \label{item: gms for the big stack}
    $\cM_{\cC} \to M_{\cC}$ is a good moduli space.
    \item There is a bijection between the set of effective G-theory classes in $\Geff(\cC)$ and the set $M_{\cC}(k)$.
    \item For any $\alpha \in \Geff (\cC)$, the moduli space of the open and closed substack $(\cM_{\cC})_{\alpha}$ is a local finite artinian scheme over $\Spec(k)$.
\end{enumerate}  
\end{thm}
\begin{proof}
Since the statement can be checked Zariski locally on the target \cite[Prop. 4.7 (ii)]{alper-good-moduli},
it enough to recall that $\cM_\cC$ is the disjoint union of its connected components $(\cM_\cC)_\alpha$ (\cref{proposition: MCalpha is connected and qc}),
each of which admits a good moduli space by \cref{lemma: irreducible components admits gms}.

By \cite[Prop. 9.1]{alper-good-moduli} the $k$-points of $M_{\cC}$ are in bijection with closed $k$-points of the stack $\cM_{\cC}$. By \Cref{lemma:properties of the stack} (iv), the closed points of $\cM_{\cC}$ are in bijection with isomorphism classes of semisimple objects in $\cC$. Note that every effective G-theory class $[A] \in \Geff (\cC)$ has a unique semisimple representative. Therefore, we have a bijection between the effective G-theory classes in $\Geff (\cC)$ and the set $M_{\cC}(k)$. This also implies that the moduli space of each open and closed substack $(\cM_{\cC})_{\alpha}$ has a single $k$-point, and so must be a local finite artinian scheme over $\Spec(k)$.
\end{proof}

\begin{remark}
Since the formation of moduli spaces commutes with base-change \cite[Prop. 4.7(i)]{alper-good-moduli}, it follows from our description of $M_{\cC}$ that for any over-field $K \supset k$ the base-change functor $(-)\otimes K$ induces a canonical identification of G-theory groups $\Gzero (\cC) = \Gzero ((\cA_K)\rfp)$.
\end{remark}

\begin{example}
    When $\cA$ is the category of representations of an acyclic quiver, then each $(M_\cA)_\alpha \cong \Spec k$, see e.g. \cite{bdfhmt}. However, this will not be true in general. For example, take $\cA = k[\varepsilon]\dash\Mod$, where $\varepsilon ^2 = 0$. Then $\Gzero(\cA\rfp) \cong \bZ$, where $[M]$ is identified with $\dim M \in \bZ$. Taking $\alpha = 1 \in \bZ$, we get $(\cM_\cA)_\alpha \cong \Spec k[\varepsilon] \times B\Gm$, and so $(M_\cA)_\alpha \cong \Spec k[\varepsilon]$.
\end{example}

\begin{remark}\label{remark 4.5}
    In view of \Cref{thm: good moduli space whole stack}, one can apply \cite[Thm. 7.4.10]{andresIN_thesis} to conclude that for all $\alpha \in \Geff$ the moduli stack $\cM_{\alpha}$ embeds as a closed substack of a stack of representations of a quiver. We note that the conclusion that the stack is linearly lit from \cite[Thm. 7.4.10]{andresIN_thesis} does not technically guarantee the embedding if the good moduli space is not reduced, but the proof of the necessary result \cite[Prop. 7.4.1]{andresIN_thesis} for obtaining the embedding also applies in the case when the good moduli space is the spectrum of a local artinian algebra.
    
    In particular, $\cM_{\alpha}$ is isomorphic to the quotient stack of an affine scheme by an action of a product of general linear groups.
    We would like to stress that this GIT presentation of the stack $\cM_{\alpha}$ only becomes apparent a posteriori, after one proves that $\cM_{\alpha}$ admits a good moduli space which is a point (possibly nonreduced).
    Even with this quotient presentation in mind, we believe that it is conceptually clearer to approach the construction of stability conditions on $\cM_{\alpha}$ from the intrinsic point of view, as we do in the next subsection.
\end{remark}

\subsection{Stability of objects in \texorpdfstring{$\cC$}{C}} \label{section: stability of objects in C}
Fix an effective G-theory class $\alpha \in \Geff (\cC)$. In this subsection we define a family of stability conditions coming from numerical invariants on $(\cM_{\cC})_{\alpha}$ in the sense of \cite{halpernleistner2018structure}. We first need to define some line bundles on $(\cM_{\cC})_{\alpha}$.
\begin{defn}
Let $\beta \in \Kzero (\cC) \cong \mathbb{Z}^{\oplus I}$ of the form $\sum_{i \in I} n_i [P_i]$, for some tuple of integers $n_i$. We define $\cL_{\beta}$ to be the line bundle in $\text{Pic}(\cM_{\cC})$ such that for every noetherian $k$-algebra $R$ and every $f \colon \Spec(R) \to \cM_{\cC}$ determined by an $R$-flat element $A \in (\cA_{R})\rfp$, we have
\[
    f^*(\cL_{\beta}) := \bigotimes_{i \in I}\text{det}(\Hom_{\cA_{R}}(P_i \otimes R, A))^{\otimes n_i}.
\]
\end{defn}

The definition above makes sense, because each $\Hom_{\cA_{R}}(P_i \otimes R, A)$ is a vector bundle on $\Spec(R)$ (Lemmas \ref{lemma:homs from projectives families} and \ref{lemma: homs from fp proj preserve finiteness}), and $n_i =0$ for all but finitely many $i \in I$.

\begin{remark}
    Let $E = \bigoplus_{w \in \mathbb{Z}} E^w$ be a graded object in $\cC$, which we may equivalently think of as a morphism $g: B(\mathbb{G}_{m})_k \to \cM_{\cC}$. Then, the $\mathbb{G}_m$-weight of the pullback is given by $\wt(g^*(\cL_{\beta})) = \sum_{w\in \mathbb{Z}}w\langle \beta,E^w\rangle$.
\end{remark}

\begin{defn}
Let $\gamma = \sum_{i \in I} m_i [P_i] \in \Kzero (\cC) \otimes_{\mathbb{Z}}\mathbb{Q}$ be a rational K-theory class such that $m_i \geq 0$ for all $i \in I$. For any field $K$ and any $V \in (\cA_{K})^{\text{fp}}$, we define the $\gamma$-length of $V$ by 
\[\ell_{\gamma}(V) := \left \langle \gamma_K,  V \right \rangle \geq 0,\]
where $\gamma_K$ denotes the base-change $\sum_{i \in I} m_i [P_i \otimes K] \in \Kzero((\cA_K)\rfp) \otimes_{\mathbb{Z}}\mathbb{Q}$. Similarly for a G-theory class $\alpha$, we set $\ell_{\gamma}(\alpha) := \langle \gamma, \alpha \rangle$.
\end{defn}

\begin{defn}
    Let $\alpha$ be a G-theory class. Let $J_{\alpha} \subset I$ be the set of all $j \in I$ such that $\langle P_j, \alpha\rangle \neq 0$ (equivalently, we have $\alpha = \sum_{i \in J_{\alpha}} a_i [S_i]$ for some $a_i >0$). We say that a class $\gamma = \sum_{i \in I} m_i [P_i] \in \Kzero (\cC)$ is $\alpha$-nondegenerate if $m_i >0$ for all $i \in J_{\alpha}$.
\end{defn}

If $\gamma$ is an $\alpha$-nondegenerate class, then for any $B \in \cC$ with G-theory class $\alpha$ and any $0 \neq E \subset B$, we have $\ell_{\gamma}(E) >0$.

\begin{example} \label{example: simple length gamma}
Let $\gamma = \sum_{i \in J_{\alpha}} \frac{1}{\langle P_i, S_i \rangle} [P_i]$. Then, for all $B \in \cC$ with G-theory class $\alpha$ and all $E \subset B$, the $\gamma$-length $\ell_{\gamma}(E)$ is just the length of $E$ in the abelian category $\cC$ (i.e. the size of its Jordan-H\"older filtration).
\end{example}

\begin{defn}
 Let $\beta \in K_0(\cC)$, and let $\gamma \in \Kzero (\cC) \otimes_{\mathbb{Z}} \mathbb{Q}$ be an $\alpha$-nondegenerate class. Then  $\cL_{\beta, \gamma}$ is defined to be the line bundle on $(\cM_{\cC})_{\alpha}$ given by
 \[ \cL_{\beta, \gamma} :=   \cL_{\beta}^{\otimes \ell_{\gamma}(\alpha)} \otimes
 \cL_{\gamma}^{\otimes -\langle \beta, \alpha\rangle} .\]
\end{defn}

For the rest of this section, we fix $\beta$ and $\gamma$ as above. 

\begin{defn} \label{defn: beta-semistability}
For any object $E \in \cC$ with $\ell_{\gamma}(E) \neq 0$, we define the $\beta$-slope of $E$ by $\sigma_{\beta}(E):=\frac{\langle\beta,E\rangle}{\ell_{\gamma}(E)}$. An object $A\in \cC$ with numerical class $\alpha$ is called \emph{$\beta$-semistable} if for all nonzero subobjects $0 \neq E \subset A$, we have the inequality of slopes $\sigma_{\beta}(E) \leq \sigma_{\beta}(A)$.
\end{defn}
Our goal is to show that the set of $\beta$-semistable objects with G-theory class $\alpha$ are the $k$-points of an open substack $(\cM_{\cC})_{\alpha}^{\beta\dash ss} \subset (\cM_{\cC})_{\alpha}$ which admits a projective good moduli space over $k$. We would also like to develop Harder-Narasimhan theory for this stability condition. We will accomplish this via the use of a numerical invariant in the sense of \cite[Def. 0.0.3]{halpernleistner2018structure}.

For any field $K \supset k$ and any positive integer $h$, morphisms $B(\mathbb{G}_m)^h_{K} \to \cM_{\cC}$ amount to $\bZ^h$-graded objects $\bigoplus_{\vec{w} \in \bZ^h} A_{\vec{w}}$ in $(\cA_{K})^{\text{fp}}$ (cf. \cite[Prop. 7.12]{alper2019existence}). Using this description, we define a rational quadratic norm on graded points as in \cite[Def. 4.1.12]{halpernleistner2018structure}.
\begin{defn}
We denote by $b_{\gamma}$ the rational quadratic norm on graded points of $(\cM_{\cC})_{\alpha}$ defined as follows. For any $h>0$, morphism $g \colon B(\mathbb{G}_m)^h_{K} \to (\cM_{\cC})_{\alpha}$ corresponding to a graded object $\bigoplus_{\vec{w} \in \bZ^h} A_{\vec{w}}$, and vector $\vec{r} \in \mathbb{R}^h$, we set
\[ b_{\gamma}(g)(\vec{r}) = \sum_{\vec{w} \in \bZ^h} 
\left(\vec{w} \cdot \vec{r}
 \right)^2  
\ell_{\gamma}
\left( A_{\vec{w}} \right). \]
where $\vec{w} \cdot \vec{r} = \sum_{j=1}^h w_j r_j$ denotes the standard inner product for two vectors $\vec{w} = (w_j)_{j=1}^h$ and $\vec{r} = (r_j)_{j=1}^h$.
\end{defn}

\begin{defn}
We define $\mu_{\beta}$ to be the $\mathbb{R}$-valued numerical invariant on the stack given by $\mu_{\beta} = \text{wt}(\cL_{\beta, \gamma})/\sqrt{b_{\gamma}}$
\end{defn}

We recall that under certain conditions, such numerical invariant can be used to define a $\Theta$-stratification on the stack (cf. \cite[\S4.1]{halpernleistner2018structure} or \cite[\S2.5]{torsion-freepaper}).
\begin{thm} \label{thm: theta stratification and moduli space stability condition} Fix $\beta \in \Kzero(\cC)$ and $\gamma \in \Kzero(\cC)\otimes_{\mathbb{Z}}\mathbb{Q}$ that is $\alpha$-nondegenerate.
\begin{enumerate}
    \item The numerical invariant $\mu_{\beta}$ defines a $\Theta$-stratification on the stack $(\cM_{\cC})_{\alpha}$.
    \item The open $\mu_{\beta}$-semistable locus $(\cM_{\cC})^{\mu_{\beta}\dash ss}_{\alpha}$ admits a good moduli space $(M_{\cC})_{\alpha}^{\beta \dash ss}$ that is projective over $\Spec(k)$. A power of the dual line bundle $\cL_{\beta, \gamma}^{-1}$ descends to an ample line bundle on $(M_{\cC})_{\alpha}^{\beta \dash ss}$.
    \item The $k$-points of the $\mu_{\beta}$-semistable open locus $(\cM_{\cC})_{\alpha}^{\mu_{\beta}\dash ss}$ are exactly the $\beta$-semistable objects.
\end{enumerate} 
\end{thm}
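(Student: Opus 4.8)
The plan is to deduce all three parts from the general machinery of \cite{halpernleistner2018structure}, using the numerical invariant $\mu_{\beta}$ together with the structural properties of $(\cM_{\cC})_{\alpha}$ already established. First I would verify the hypotheses that let the numerical invariant define a $\Theta$-stratification: by \Cref{lemma:properties of the stack} the stack $(\cM_{\cC})_{\alpha}$ has affine diagonal, is $\Theta$-reductive and $S$-complete; by \Cref{prop:irreducible components are quasicompact} together with \Cref{thm: good moduli space whole stack} it is quasicompact (being one of the open and closed pieces $(\cM_{\cC})_{\alpha}$). I would check that $\mu_{\beta} = \operatorname{wt}(\cL_{\beta,\gamma})/\sqrt{b_{\gamma}}$ is strictly $\Theta$-monotone and strictly $S$-monotone in the sense of \cite[\S4.1]{halpernleistner2018structure}. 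Monotonicity should follow from the linearity of the weight function $\operatorname{wt}(\cL_{\beta,\gamma})$ in the $\bZ$-grading and the fact that $b_{\gamma}$ is a genuine positive-definite quadratic norm on graded points, since $\gamma$ is $\alpha$-nondegenerate and hence $\ell_{\gamma}(A_{\vec{w}}) > 0$ for every nonzero graded piece. Invoking \cite[Thm. B or the boundedness criterion in \S4]{halpernleistner2018structure} then yields part (i), provided the semistable locus and each stratum are of finite type; finiteness is inherited from the quasicompactness of $(\cM_{\cC})_{\alpha}$.

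For part (ii), I would apply the existence criterion for good moduli spaces to the open semistable locus $(\cM_{\cC})^{\mu_{\beta}\dash ss}_{\alpha}$. This substack inherits $\Theta$-reductivity and $S$-completeness from the ambient stack, is of finite type, and by general theory of numerical invariants the semistable locus satisfies the existence part of the valuative criterion for properness; combining with the strengthening recorded in \Cref{remark: ahlh strengthening} (or directly \cite[Thm. A]{alper2019existence}) gives a good moduli space $(M_{\cC})_{\alpha}^{\beta\dash ss}$ that is proper over $k$. Projectivity and the ampleness statement should follow from the semiample-to-ample descent result \cite[Thm. 6.5.3 or the Positivity Lemma]{halpernleistner2018structure}: the line bundle $\cL_{\beta,\gamma}$ is constructed precisely so that its dual is positive along the $\Theta$-stratification, hence $\cL_{\beta,\gamma}^{-1}$ descends to an ample line bundle on the good moduli space, upgrading properness to projectivity.

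For part (iii), I would unwind the definition of $\mu_{\beta}$-semistability and match it with the numerical $\beta$-semistability of \Cref{defn: beta-semistability}. A $k$-point $A$ is $\mu_{\beta}$-unstable precisely when there is a nontrivial filtration, i.e.\ a map $\Theta_K \to (\cM_{\cC})_{\alpha}$ with positive $\mu_{\beta}$, and by the dictionary of \cite[Prop. 7.12]{alper2019existence} such filtrations correspond to $\bZ$-weighted filtrations of $A$ by subobjects $E \subset A$. Evaluating $\operatorname{wt}(\cL_{\beta,\gamma})$ and $b_{\gamma}$ on the associated one-parameter degeneration, I expect to recover exactly the slope comparison $\sigma_{\beta}(E)$ versus $\sigma_{\beta}(A)$, so that $\mu_{\beta}(A) \leq 0$ for all such filtrations if and only if $\sigma_{\beta}(E) \leq \sigma_{\beta}(A)$ for every nonzero $E \subset A$. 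The main obstacle will be this explicit identification in part (iii): one must carefully compute $\operatorname{wt}(\cL_{\beta,\gamma})$ as a linear functional on the grading weights and confirm that the normalization by the scaling factor $\ell_{\gamma}(\alpha)$ and the shift $-\langle\beta,\alpha\rangle$ in the definition of $\cL_{\beta,\gamma}$ make the sign of $\mu_{\beta}$ on a filtration agree exactly with the slope inequality. Verifying that it suffices to test on single-step filtrations $0 \subset E \subset A$ (rather than arbitrary $\bZ$-gradings) is the technical heart, and should reduce—as in the quiver case of King—to the fact that the maximally destabilizing object is detected by a single subobject.
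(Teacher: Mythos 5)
Your strategy matches the paper's: both routes run the numerical invariant $\mu_{\beta}=\wt(\cL_{\beta,\gamma})/\sqrt{b_{\gamma}}$ through the machinery of \cite{halpernleistner2018structure} to get (i) and (ii), and both prove (iii) by translating $\Theta$-filtrations into $\bZ$-weighted filtrations via \cite[Cor.~7.13]{alper2019existence}, computing the weight of $\cL_{\beta,\gamma}$ on a filtration, and reducing by summation by parts to the single-subobject slope inequality (the paper outsources that last reduction to \cite[Prop.~5.16]{torsion-freepaper}). Two points of divergence deserve comment. First, the paper does not assemble (ii) from \cite[Thm.~A]{alper2019existence} plus a separate positivity/descent argument; it applies \cite[Thm.~5.6.1~(1)+(2)]{halpernleistner2018structure} with $\XX=\YY=(\cM_{\cC})_{\alpha}$, relative to the good moduli space $(M_{\cC})_{\alpha}$ already produced in \Cref{thm: good moduli space whole stack}. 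Since that space is a finite $k$-scheme, the relative (quasi-)projectivity furnished by that theorem immediately yields projectivity over $\Spec(k)$ together with the descended ample power of $\cL_{\beta,\gamma}^{-1}$; this is exactly where the earlier construction of the moduli space of the whole stack is used, and your write-up should make that input explicit rather than re-deriving properness from scratch. Second, your claim that the semistable locus ``inherits $\Theta$-reductivity and $S$-completeness from the ambient stack'' is false as stated: these are unique-extension properties for maps out of $\Theta_R$ and the $S$-completeness test stack, and the extension produced in the ambient stack need not land in an open substack. Supplying those properties (and semistable reduction) for the semistable locus is precisely the job of the strict $\Theta$- and $S$-monotonicity of $\mu_{\beta}$, which you do mention; so the gap is repairable, but the inheritance claim should be deleted rather than leaned on.
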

\begin{proof}
Let $\varphi: (\cM_{\cC})_{\alpha} \to (M_{\cC})_{\alpha}$ be the separated good moduli space of the open and closed substack $(\cM_{\cC})_{\alpha} \subset \cM_{\cC}$, which exists by \Cref{thm: good moduli space whole stack}. Recall that the good moduli space $(M_{\cC})_{\alpha}$ is finite over $\Spec(k)$. To conclude (i) and (ii), we use \cite[Thm. 5.6.1 (1)+(2)]{halpernleistner2018structure} with $\mathfrak{X} = \mathfrak{Y} = (\cM_{\cC})_{\alpha}$ and $\mathcal{L} = \mathcal{L}^{-1}_{\beta, \gamma}$ and $\|\bullet \| = b_{\gamma}$. In this case, the weak $\Theta$-stratification will automatically be a $\Theta$-stratification because the characteristic of $k$ is $0$.

We are left to prove (iii). Let $p \in (\cM_{\cC})_{\alpha}(k)$ corresponding to an object $A \in \cC$ with G-theory class $\alpha$. By \cite[Cor. 7.13]{alper2019existence}, $\Theta$-filtrations of $p$ correspond to $\mathbb{Z}$-weighted filtrations $(A_w)_{w \in \mathbb{Z}}$, where 
\begin{itemize}
    \item $A_w\subset A_{w-1} \subset A$ for all $w \in \mathbb{Z}$.
    \item $A_w=0$ for $w \gg 0$, and $A_w = A$ for $w \ll 0$.
\end{itemize}
For any such filtration $f$ corresponding to $(A_w)_{w \in \mathbb{Z}}$, a direct computation shows that the value of the numerical invariant is
\[ \mu_{\beta}(f) = \frac{\sum_{w \in \mathbb{Z}} w \left(\ell_{\gamma}(\alpha)\cdot \langle \beta, A_w/A_{w+1}\rangle - \langle \beta, \alpha\rangle \cdot \ell_{\gamma}(A_w/A_{w+1})\right)}{\sqrt{\sum_{w \in \mathbb{Z}} w^2 \ell_{\gamma}(A_w/A_{w+1})}}.\]
Therefore the point $p = A$ is $\mu_{\beta}$-semistable if and only if for all weighted filtrations $(A_w)_{w \in \mathbb{Z}}$ we have
\[ \sum_{w \in \mathbb{Z}} w (\ell_{\gamma}(\alpha)\cdot \langle \beta, A_w/A_{w+1} \rangle - \langle \beta, \alpha \rangle \cdot \ell_{\gamma}(A_w/A_{w+1})) \leq 0. \]
Using the additivity relations $\langle \beta, A_{w}/A_{w+1}\rangle=\langle \beta,A_{\omega}\rangle-\langle \beta,A_{w+1}\rangle$ and $\ell_{\gamma}(A_w/A_{w+1}) = \ell_{\gamma}(A_w) - \ell_{\gamma}(A_{w+1})$ as well as the identities $\langle \beta, A \rangle = \langle \beta, \alpha \rangle$ and $\ell_{\gamma}(A) = \ell_{\gamma}(\alpha)$, we rewrite this as follows
\[  \sum_{w \in \mathbb{Z}} (\ell_{\gamma}(A) \langle \beta, A_w \rangle - \ell_{\gamma}(A_w) \langle \beta, A \rangle ) \leq 0. \]
A standard argument (cf. \cite[Prop. 5.16]{torsion-freepaper}) shows that the inequality above is satisfied for all filtrations $(A_w)_{w \in \mathbb{Z}}$ if and only if $A$ is $\beta$-semistable in the sense of \Cref{defn: beta-semistability}.
\end{proof}

\section{Examples} \label{section: example}

\subsection{Modules over a finite dimensional associative algebra}
\label{subsection: fin dim assoc algebras}

\begin{prop} \label{prop: cat of representations of algebra satisfies conditons}
Let $A$ be an associative $k$-algebra that is finite-dimensional as a $k$-vector space. Then the category $\cC = A\dash \fdmod$ of finite dimensional left modules over $A$ is nearly finite in the sense of \Cref{standing assumptions on C}.
\end{prop}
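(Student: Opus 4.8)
The plan is to verify each of the three standing assumptions in turn, after first recording the basic structural facts. The category $\cC = A\dash\fdmod$ is $k$-linear and abelian because the kernel and cokernel (formed in $A\dash\Mod$) of a morphism between finite-dimensional $A$-modules are again finite-dimensional $A$-modules, so $\cC$ is closed under kernels and cokernels inside $A\dash\Mod$; it is essentially small since every object is, up to isomorphism, a finite-dimensional $k$-vector space equipped with an algebra homomorphism $A \to \End_k(M)$, and such data form a set.

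For Hom-finiteness \ref{standing assumption: C Hom-finite}, I would observe that for $M, N \in \cC$ the inclusion $\Hom_\cC(M,N) \subseteq \Hom_k(M,N)$ realizes the $\Hom$-space as a $k$-subspace of a finite-dimensional vector space, hence it is finite-dimensional. For the finiteness condition \ref{standing assumption: C finite}, I would note that any strictly increasing chain of $A$-submodules of $M$ is in particular a strictly increasing chain of $k$-subspaces, so its length is bounded by $\dim_k M < \infty$; thus $M$ is both noetherian and artinian and therefore of finite length by \cref{remark: finite length iff noeth and art}.

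The substantive point is the existence of enough projectives \ref{standing assumption: C has enough projectives}. The regular representation $A$, viewed as a left module over itself, lies in $\cC$ since $\dim_k A < \infty$, and it is projective in the ambient category $A\dash\Mod$ because it is free. Given any $M \in \cC$, finite-dimensionality forces $M$ to be finitely generated, so there is an epimorphism $A^{\oplus n} \twoheadrightarrow M$ in $A\dash\Mod$ with $A^{\oplus n} \in \cC$. The one point requiring care is that $A^{\oplus n}$ is projective as an object of the subcategory $\cC$, not merely of $A\dash\Mod$: this holds because epimorphisms in $\cC$ coincide with surjections of $A$-modules, so any lifting problem in $\cC$ against an epimorphism of $\cC$ is solved by the lift furnished by projectivity in $A\dash\Mod$, and that lift automatically lands in $\cC$. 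Hence $\cC$ has enough projectives, completing the verification.

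I expect the only genuine obstacle to be this last compatibility of projectivity with passage to the full subcategory $\cC \subset A\dash\Mod$; everything else is routine. It is resolved by the observation that $\cC$ is closed under finite direct sums, kernels, and cokernels inside $A\dash\Mod$, so that both the finiteness of the free cover $A^{\oplus n}$ and its projectivity transfer to the subcategory.
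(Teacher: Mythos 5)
Your proposal is correct and follows essentially the same route as the paper's proof: finite $k$-dimension gives Hom-finiteness and finite length at once, and the regular representation $A$ serves as a projective generator, so every object receives an epimorphism from a finite free module. The paper states this more tersely, while you additionally spell out the (valid and worthwhile) check that projectivity of $A^{\oplus n}$ in $A\dash\Mod$ descends to the full subcategory $\cC$ because $\cC$ is closed under kernels and cokernels, so epimorphisms agree.
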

\begin{proof}
All objects of $\cC$ have finite dimension as $k$-vector spaces, and so they are of finite length. It also follows that $\cC$ is Hom-finite. Since $A$ regarded as an $A$-module is a projective generator, $\cC$ satisfies \ref{standing assumption: C has enough projectives}. Moreover, every simple element of $\cC$ is a quotient module of $A$, and so the isomorphism classes of simple objects in $\cC$ form a set. \qedhere
\end{proof}

In view of \Cref{prop: cat of representations of algebra satisfies conditons}, our main results in \Cref{thm: theta stratification and moduli space stability condition} apply to the moduli of representations of a finite-dimensional associative $k$-algebra. This moduli problem and stability conditions have been considered by King \cite[\S 4]{king}. For comparison, we note that King is not completely explicit about the nonreduced structure of the moduli problem, opting to work with varieties instead, whereas the Artin-Zhang functor equips the moduli spaces with a natural (possibly nonreduced) scheme structure. This can, of course, also be made explicit from the point of view in \cite{king}.

\begin{example}[Representations of acyclic quivers]
For the following example, we refer the reader to \cite{king} and \cite{bdfhmt} and references therein. Let $Q$ be an acyclic quiver. The category $\cC$ of finite dimensional representations of $Q$ is isomorphic to finite dimensional representations of its path algebra, which is an associative algebra of finite dimension over $k$. In particular, $\cC$ satisfies \Cref{standing assumptions on C} by \Cref{prop: cat of representations of algebra satisfies conditons}.

In this case we have $\Gzero (\cC) = \mathbb{Z}^{\oplus I}$, where $I$ is the set of vertices of the quiver $Q$. 
For each vertex $i \in I$, there is an associated projective object $P_i$ in $\cC$. For any representation $V$ and $i \in I$, the pairing $\langle P_i,V \rangle$ is the dimension of the underlying $k$-vector space $V_i$ at the vertex $i$. 
Given a class $\beta = \sum_{i \in I} n_i[P_i] \in \Kzero(\cC)$ and $\gamma$ as in \Cref{example: simple length gamma}, our definition of $\beta$-semistability (\cref{defn: beta-semistability}) agrees with the one defined by King \cite{king}.
Hence, we recover the moduli spaces of representations of $Q$ constructed using GIT in \cite{king} and intrinsically in \cite{bdfhmt}.
\end{example}

We offer another quiver example that is related to representations of finite-dimensional algebras, even though it does not strictly fall into that framework.
\begin{example}[Representations of semi-infinite quivers]
    Let $Q$ be a quiver with possibly infinitely many vertices and edges. We say that $Q$ is \emph{projectively semi-infinite} if for any vertex $i$, the number of paths starting in $i$ is finite. Equivalently, we require that any indecomposable projective representation $P$ has finite total dimension $\sum_{i \in Q_0} \dim P_i$.
    
    Set $\cC = \rep Q$ to be the category of representations of $Q$ whose total dimension is finite. Then $\cC$ is nearly finite.
    In this case, the connected components of $\cM_{\cC}$ correspond to the moduli stacks of representation of finite acyclic subquivers $W \subset Q$ with fixed dimension vector.
\end{example}

\subsection{Category \texorpdfstring{$\cO$}{O}}

Let $\mathfrak{g}$ be a complex semisimple Lie algebra. Fix a Cartan subalgebra $\mathfrak{h}$ and let $\mathfrak{h}^{\vee}$ be its dual. Fix a system of simple roots $\Phi$ and let $\Phi^{+}$ be the positive roots. Let $\mathfrak{n}=\oplus_{\alpha>0}\mathfrak{g}_{\alpha}$.

\begin{defn}
    The \emph{BGG category} $\mathcal{O}$ is the full subcategory of $\Mod U(\mathfrak{g})$ such that
    \begin{enumerate}
        \item $M$ is a finitely generated $U(\mathfrak{g})$-module.
        \item $M$ is $\mathfrak{h}$-semisimple, i.e. $M=\oplus_{\lambda\in \mathfrak{h}^{\vee}}M_{\lambda}$.
        \item $M$ is locally $\mathfrak{n}$-finite, i.e. $\forall v\in M, U(\mathfrak{n})\cdot v$ is finite dimensional.
    \end{enumerate}
\end{defn}

\begin{thm}
Category $\mathcal{O}$ is a nearly finite abelian category.
\end{thm}
\begin{proof}
% From general category theory we know that $(\Ind(\mathcal{O}))\rfp=\mathcal{O}$. 
Since $\mathcal{O}$ is noetherian, artinian and Hom-finite by \cite[Thm. 1.1, 1.11]{humphreys.O}, 
\ref{standing assumption: C Hom-finite} and \ref{standing assumption: C finite} are satisfied. It is essentially small because simple objects are parametrized by linear functionals in $\mathfrak{h}^{\vee}$ by \cite[Thm. 1.3]{humphreys.O}. Furthermore, \ref{standing assumption: C has enough projectives} holds by \cite[Thm. 3.8]{humphreys.O}.
\end{proof}

Let $M(\lambda)$ denote the Verma module associated to $\lambda\in \mathfrak{h}^{\vee}$. Its unique simple quotient $L(\lambda)$ is the unique simple object of highest weight $\lambda$ \cite[Thm. 1.3]{humphreys.O}. Each $L(\lambda)$ has a unique indecomposable projective cover $P(\lambda)$ \cite[Thm. 3.9]{humphreys.O}. In this case, every object is perfect and hence the $G$-theory and $K$-theory are the same. For any $M\in \mathcal{O}$, its $K$-theory class is given by $\sum_{\lambda} [M: L(\lambda)][L(\lambda)]$ where $[M:L(\lambda)]$ is composition factor multiplicity.

\begin{example}
Consider $\mathfrak{g}=\mathfrak{sl}(2,\mathbb{C})$. Then for any block $\mathcal{O}_{\chi_{\lambda}}$, there are five non-isomorphic indecomposable modules
$$L(\lambda), L(-2-\lambda)=M(-2-\lambda), M(\lambda)=P(\lambda), M(\lambda)^{\vee}, P(-2-\lambda).$$
For a fixed class $\alpha$, then there are only finitely many objects with that class. Hence, our moduli spaces would be a finite number of points.

Consider the principal block $\mathcal{O}_{\chi_0}$, that is $\lambda=0$. Fix a class $\beta=\beta_1[L(0)]+\beta_2[L(-2)]$ and take $\gamma$ as in Example \ref{example: simple length gamma}. Then we find that the $\beta$-slope $\sigma_{\beta}$ is
$$\sigma_{\beta}(L(0))=\beta_1, \sigma_{\beta}(L(-2))=\beta_2$$
$$\sigma_{\beta}(M(0))=\sigma_{\beta}(M(0)^{\vee})=\frac{\beta_1+\beta_2}{2}$$
$$\sigma_{\beta}(P(-2))=\frac{\beta_1+2\beta_2}{3}.$$
It follows that $M(0)$ is $\beta$-semistable if and only if $\beta_1\geq \beta_2$. Also, $M(0)^{\vee}$ is $\beta$-semistable if and only if $\beta_2\geq \beta_2$. Meanwhile, $P(-2)$ is $\beta$-semistable only when $\beta_1=\beta_2$, which gives the trivial stability condition. We have recovered the stability conditions given in \cite[Ex. 3.4]{futorny2007moduli}.

\end{example}

\subsection{Comodules over co-Frobenius Hopf algebras}
\label{subsection: co-Frobenius}

Let $C$ be a counital coalgebra. 
Set $\cC = H\dash\fdcomod$, so $\cA = H\dash\Comod = \Ind (\cC)$ by the fundamental theorem of coalgebra.
The category $\cC$ automatically satisfies \ref{standing assumption: C Hom-finite} and \ref{standing assumption: C finite}.
For the fact that $\cC$ is essentially small, we use that simple comodules are in bijection with simple subcoalgebras of $H$ \cite[pg. 354]{larson-hopf-algebras}.

\begin{thm}
[{\cite[Thms 3, 10]{bertrand-semiperfect-coalgebras}}]
\label{theorem: equivalent defs of co-Frobenius Hopf algebras}
    The following are equivalent for a Hopf algebra $H$.
    \begin{enumerate}
        \item $H$ is co-Frobenius.
        \item The injective hull of a simple $H$-comodule is finite-dimensional.
    \end{enumerate}
\end{thm}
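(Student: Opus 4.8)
The plan is to route both conditions through a single pivot: the existence of a nonzero left integral on $H$, that is, a nonzero $T \in H^*$ with $f * T = f(1)\,T$ for every $f \in H^*$. I would prove the chain of equivalences (i) $\Leftrightarrow$ ($\int_l \neq 0$) $\Leftrightarrow$ (ii), using the antipode throughout to pass between left- and right-handed versions of each statement; for a Hopf algebra the left and right notions of semiperfect and of co-Frobenius coincide, so no generality is lost by fixing one side.

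First I would set up rational duality. Every object of $H\dash\Comod$ is a module over the convolution algebra $H^*$, and the finite-dimensional comodules are exactly the rational $H^*$-modules; in particular $H$ itself, viewed as a comodule over itself via the comultiplication, is a \emph{rational} $H^*$-module because every element lies in a finite-dimensional subcomodule. Decomposing $H$ as a comodule into injective hulls of its simple subcomodules, $H \cong \bigoplus_\lambda E(S_\lambda)^{\oplus d_\lambda}$, I would observe that condition (ii) is precisely the assertion that $H$ is semiperfect, i.e. each $E(S_\lambda)$ is finite-dimensional. Here I invoke the Hopf-algebra refinement of Lin's theorem: the rational part $\mathrm{Rat}(H^*)$ is a two-sided ideal, and it is nonzero if and only if $H$ is semiperfect.

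Next I would identify $\mathrm{Rat}(H^*) \neq 0$ with $\int_l \neq 0$. The key tool is the fundamental theorem of Hopf modules, which says the category of $(H,H)$-Hopf modules is equivalent to $\Vect_k$ via coinvariants; equipping $\mathrm{Rat}(H^*)$ with its natural Hopf-module structure, its space of coinvariants is exactly the space of integrals, so a nonzero rational part forces a nonzero integral, and conversely an integral is a nonzero rational element. This closes the equivalence (ii) $\Leftrightarrow$ ($\int_l \neq 0$). For (i) $\Leftrightarrow$ ($\int_l \neq 0$): if $H$ is co-Frobenius there is an injective left $H^*$-module map $H \hookrightarrow H^*$; since $H$ is rational its image lands in $\mathrm{Rat}(H^*)$, so the rational part is nonzero and we obtain an integral as above. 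Conversely, given a nonzero integral $T$ I would exhibit an explicit embedding of left $H^*$-modules $H \hookrightarrow H^*$, along the lines of $h \mapsto \big(x \mapsto T(x\,S(h))\big)$, and verify injectivity from the nondegeneracy of the pairing $T$ induces.

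I expect the main obstacle to lie in this last step, in the infinite-dimensional bookkeeping: one must control the (typically enormous) rational part $\mathrm{Rat}(H^*)$ inside $H^*$, ensure the integral produced is genuinely nonzero, and --- crucially --- establish that the antipode is injective (indeed bijective) in the presence of a nonzero integral, since both the embedding and the left/right symmetry rely on it. Verifying injectivity of the candidate map $H \to H^*$ without assuming bijectivity of $S$ a priori is the delicate point that would require the most care.
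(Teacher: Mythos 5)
The paper does not actually prove this statement: it is quoted verbatim from Lin's work on semiperfect coalgebras (the cited Theorems 3 and 10 of \cite{bertrand-semiperfect-coalgebras}), and the text immediately afterwards says that for the purposes of the paper the equivalence is \emph{taken as the definition} of a co-Frobenius Hopf algebra. So there is no in-paper argument to compare against; what you have written is a sketch of the standard proof from the literature (Lin, and Chapter 5 of D\u{a}sc\u{a}lescu--N\u{a}st\u{a}sescu--Raianu), pivoting through the nonvanishing of the space of left integrals $\int_l$ and the fundamental theorem of Hopf modules applied to $\operatorname{Rat}(H^*)$. That is the right route, and your identification of the delicate points (injectivity of the candidate embedding $H\hookrightarrow H^*$, bijectivity of the antipode, one-sided versus two-sided statements) matches where the real work lies in the published proofs. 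One imprecision worth fixing: Lin's theorem for a general coalgebra $C$ characterizes semiperfectness by \emph{density} of the rational part of $C^*$ in the finite topology, not by its mere nonvanishing --- a nonzero rational part is strictly weaker for arbitrary coalgebras. The implication ``$\operatorname{Rat}(H^*)\neq 0 \Rightarrow$ dense'' is precisely where the Hopf structure earns its keep, via the isomorphism $\operatorname{Rat}(H^*)\cong \int_l\otimes H$ and the resulting nondegeneracy of the bilinear form induced by a nonzero integral; as written, your appeal to a ``Hopf-algebra refinement of Lin's theorem'' quietly absorbs this step, and it should be made explicit rather than cited as a black box, since it is logically intertwined with the injectivity of $h\mapsto\bigl(x\mapsto T(x\,S(h))\bigr)$ that you flag at the end.
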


For the purpose of the present paper, we use this theorem as a definition of a co-Frobenius Hopf algebra.

\begin{coroll}
    Let $H$ be a co-Frobenius Hopf algebra.
    Then $\cC = H\dash\fdcomod$ is nearly finite.
\end{coroll}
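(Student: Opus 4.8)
The essential smallness of $\cC$, together with \ref{standing assumption: C Hom-finite} and \ref{standing assumption: C finite}, has already been recorded above, so the only point that uses the co-Frobenius hypothesis is \ref{standing assumption: C has enough projectives}. The plan is to produce enough projectives by \emph{dualizing} enough injectives, exploiting the linear duality between finite-dimensional left and right $H$-comodules together with the input from \Cref{theorem: equivalent defs of co-Frobenius Hopf algebras}.

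First I would verify that $\cC$, and its right-handed analogue $\fdcomod\dash H$, have enough injectives. For a finite-dimensional comodule $V$ the inclusion of its socle $\operatorname{soc}(V) \hookrightarrow V$ is essential, so the injective hull of $V$ in the full comodule category agrees with that of $\operatorname{soc}(V)$. As $\operatorname{soc}(V)$ is a finite direct sum of simples, its injective hull is a finite direct sum of injective hulls of simple comodules, each finite-dimensional by \Cref{theorem: equivalent defs of co-Frobenius Hopf algebras}(ii). Hence the injective hull of $V$ is finite-dimensional, lies in $\cC$, and is injective there (injectivity restricts from $H\dash\Comod$ to the subcategory closed under subobjects). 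Thus every object of $\cC$ embeds into a finite-dimensional injective, and the same argument applies on the right once the co-Frobenius condition is known to be left--right symmetric.

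Next I would invoke the linear-duality functor $(\blank)^* \colon \fdcomod\dash H \to \cC$, $W \mapsto W^*$, which on finite-dimensional comodules is a contravariant exact equivalence, with quasi-inverse the analogous duality on the other side and $(\blank)^{**} \cong \mathrm{id}$ via the finite-dimensional double dual. Because it is an exact duality it interchanges injective and projective objects: for an injective $I$ the contravariant functor $\operatorname{Hom}(\blank, I)$ is exact, so $\operatorname{Hom}(I^*, \blank) \cong \operatorname{Hom}(\blank, I) \circ (\blank)^*$ is exact, i.e. $I^*$ is projective. Given $V \in \cC$, I would embed $V^* \hookrightarrow I$ with $I$ a finite-dimensional injective right comodule and dualize to obtain an epimorphism $I^* \twoheadrightarrow V^{**} \cong V$ with $I^*$ projective in $\cC$; this is exactly \ref{standing assumption: C has enough projectives}.

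The main obstacle is the Hopf-algebraic input that makes this duality useful, namely that being co-Frobenius is left--right symmetric, so that injective hulls of \emph{right} simple comodules are again finite-dimensional and $\fdcomod\dash H$ has enough injectives; this is where the full strength of \Cref{theorem: equivalent defs of co-Frobenius Hopf algebras} (rather than merely a one-sided semiperfectness) enters. An alternative route that sidesteps the symmetry statement is to use that the antipode of a co-Frobenius Hopf algebra is bijective, which promotes $(\blank)^*$ to a contravariant self-equivalence of $\cC$ and lets one dualize injectives of $\cC$ directly into projectives of $\cC$; either input is standard in the structure theory of co-Frobenius Hopf algebras.
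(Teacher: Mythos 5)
Your proposal is correct and, in its second (antipode) variant, is exactly the paper's argument: the antipode of a co-Frobenius Hopf algebra makes $(\blank)^\vee$ a contravariant self-equivalence of $\cC$ exchanging injectives and projectives, and dualizing the finite-dimensional injective hull of $S^\vee$ (guaranteed by \cref{theorem: equivalent defs of co-Frobenius Hopf algebras}) produces a projective surjecting onto $S$. The only differences are that the paper treats simple objects alone (which suffices in a finite-length category), whereas your socle argument handles arbitrary $V$, and that your primary left--right duality route additionally requires the left--right symmetry of the co-Frobenius condition, an extra input you correctly flag before falling back on the antipode argument.
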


\begin{proof}
    It remains to check that $\cC$ has enough projective objects \ref{standing assumption: C has enough projectives}. For this, we show that every simple object $S \in \cC$ admits a surjection from a projective object $P \to S$. Since $H$ is a Hopf algebra, using its antipode ensures that taking the $k$-linear dual $(\blank)^\vee \colon \cC\rop \to \cC$ is a self-antiequivalence of $\cC$. The comodule $S^\vee$ is still simple, so by \cref{theorem: equivalent defs of co-Frobenius Hopf algebras}, $S^\vee \in \cC$ admits a finite-dimensional injective hull $S^\vee \to E$, so $E \in \cC$. The comodule $P = E^\vee$ is projective, since an antiequivalence sends injective objects to projective.
    Hence, we get the desired surjection $P \to S$.
\end{proof}

We next present an example of a co-Frobenius Hopf algebra which has infinitely many simple comodules. 
This is relevant because, if there were only finitely many, the category of comodules would be equivalent to the category of representations of a finite-dimensional associative algebra, but moduli spaces have been known for them since 1994 \cite{king}, see \cref{subsection: fin dim assoc algebras} for more details.

\begin{example}[Finite dimensional comodules over quantum groups]
    An example known to representation theorists includes the algebra $\cO_q(\mathfrak{g}) \subset U_q(\mathfrak{g})^\vee$ of functions on Lusztig's quantum group at a root of unity $q$ attached to a simple Lie algebra $\mathfrak{g}$.
    The Hopf algebra $\cO_q(\mathfrak{g})$ was proved to be co-Frobenius in
    \cite[Example 4.1]{AD-qgroups-are-coFrob}.
    Let $\cC$ denote the category of finite-dimensional comodules over $\cO_q(\mathfrak{g})$. We claim that the set of simple objects in $\cC$ is infinite. 
    Indeed, every simple comodule is a subcomodule of $\cO_q(\mathfrak{g})$. By the fundamental theorem of coalgebra, every element of a comodule is contained in a finite-dimensional subcomodule; in particular, every simple comodule is finite-dimensional, and every element of $\cO_q(\mathfrak{g})$ is contained in a simple subcomodule.
    But $\cO_q(\mathfrak{g})$ is infinite-dimensional, hence there will necessarily be infinitely many simple comodules. 
\end{example}

\footnotesize{\bibliography{moduli_abelian.bib}}

\newcommand{\etalchar}[1]{$^{#1}$}
\begin{thebibliography}{HLFHJ24}

\bibitem[AD03]{AD-qgroups-are-coFrob}
Nicol\'{a}s Andruskiewitsch and Sorin D\u{a}sc\u{a}lescu.
\newblock Co-{F}robenius {H}opf algebras and the coradical filtration.
\newblock {\em Math. Z.}, 243(1):145--154, 2003.

\bibitem[AHLH23]{alper2019existence}
Jarod Alper, Daniel Halpern-Leistner, and Jochen Heinloth.
\newblock Existence of moduli spaces for algebraic stacks.
\newblock {\em Invent. Math.}, 234(3):949--1038, 2023.

\bibitem[Alp13]{alper-good-moduli}
Jarod Alper.
\newblock Good moduli spaces for {A}rtin stacks.
\newblock {\em Ann. Inst. Fourier (Grenoble)}, 63(6):2349--2402, 2013.

\bibitem[ASZ99]{artin-small-zhang}
Michael Artin, Lance~W. Small, and James~J. Zhang.
\newblock Generic flatness for strongly {N}oetherian algebras.
\newblock {\em J. Algebra}, 221(2):579--610, 1999.

\bibitem[AZ01]{artin-zhang}
Michael Artin and James~J. Zhang.
\newblock Abstract {H}ilbert schemes.
\newblock {\em Algebr. Represent. Theory}, 4(4):305--394, 2001.

\bibitem[BDF{\etalchar{+}}22]{bdfhmt}
Pieter Belmans, Chiara Damiolini, Hans Franzen, Victoria Hoskins, Svetlana Makarova, and Tuomas Tajakka.
\newblock Effective bounds and ampleness of determinantal line bundles on good moduli spaces of quiver representations.
\newblock \url{https://arxiv.org/abs/2210.00033v1}, 2022.

\bibitem[Bea95]{Beauville95}
Arnaud Beauville.
\newblock Vector bundles on curves and generalized theta functions: recent results and open problems.
\newblock In {\em Current topics in complex algebraic geometry ({B}erkeley, {CA}, 1992/93)}, volume~28 of {\em Math. Sci. Res. Inst. Publ.}, pages 17--33. Cambridge Univ. Press, Cambridge, 1995.

\bibitem[DNR01]{DNR_Hopf_algebras}
Sorin D\u{a}sc\u{a}lescu, Constantin N\u{a}st\u{a}sescu, and \c{S}erban Raianu.
\newblock {\em Hopf algebras}, volume 235 of {\em Monographs and Textbooks in Pure and Applied Mathematics}.
\newblock Marcel Dekker, Inc., New York, 2001.
\newblock An introduction.

\bibitem[DT94]{DT94}
Ron Donagi and Loring~W. Tu.
\newblock Theta functions for {${\rm SL}(n)$} versus {${\rm GL}(n)$}.
\newblock {\em Math. Res. Lett.}, 1(3):345--357, 1994.

\bibitem[DW00]{derksen-weyman}
Harm Derksen and Jerzy Weyman.
\newblock Semi-invariants of quivers and saturation for {L}ittlewood-{R}ichardson coefficients.
\newblock {\em J. Amer. Math. Soc.}, 13(3):467--479, 2000.

\bibitem[FJM08]{futorny2007moduli}
Vyacheslav Futorny, Marcos Jardim, and Adriano~A. Moura.
\newblock On moduli spaces for abelian categories.
\newblock {\em Comm. Algebra}, 36(6):2171--2185, 2008.

\bibitem[HL21]{halpernleistner2018structure}
Daniel Halpern-Leistner.
\newblock On the structure of instability in moduli theory.
\newblock \url{https://arxiv.org/abs/1411.0627v5}, 2021.

\bibitem[HLFHJ24]{torsion-freepaper}
Daniel Halpern-Leistner, Andres Fernandez~Herrero, and Trevor Jones.
\newblock Moduli spaces of sheaves via affine {G}rassmannians.
\newblock {\em J. Reine Angew. Math.}, 809:159--215, 2024.

\bibitem[Hum08]{humphreys.O}
James~E. Humphreys.
\newblock {\em Representations of semisimple {L}ie algebras in the {BGG} category {$\rm{O}$}}, volume~94 of {\em Graduate Studies in Mathematics}.
\newblock American Mathematical Society, Providence, RI, 2008.

\bibitem[InNn24]{andresIN_thesis}
Andr\'{e}s Ib\'{a}\~{n}ez N\'{u}\~{n}ez.
\newblock Refined {H}arder--{N}arasimhan filtrations in moduli theory.
\newblock \url{https://www.math.columbia.edu/~ibaneznunez/documents/ibaneznunez_thesis.pdf}, 2024.
\newblock University of Oxford Thesis.

\bibitem[Kan19]{Kanda}
Ryo Kanda.
\newblock Non-exactness of direct products of quasi-coherent sheaves.
\newblock {\em Doc. Math.}, 24:2037--2056, 2019.

\bibitem[Kin94]{king}
Alastair~D. King.
\newblock Moduli of representations of finite-dimensional algebras.
\newblock {\em Quart. J. Math. Oxford Ser. (2)}, 45(180):515--530, 1994.

\bibitem[Kra15]{krause-ks-cats}
Henning Krause.
\newblock Krull-{S}chmidt categories and projective covers.
\newblock {\em Expo. Math.}, 33(4):535--549, 2015.

\bibitem[KS06]{kashiwara-schapira}
Masaki Kashiwara and Pierre Schapira.
\newblock {\em Categories and sheaves}, volume 332 of {\em Grundlehren der mathematischen Wissenschaften [Fundamental Principles of Mathematical Sciences]}.
\newblock Springer-Verlag, Berlin, 2006.

\bibitem[Lar71]{larson-hopf-algebras}
Richard~Gustavus Larson.
\newblock Characters of {H}opf algebras.
\newblock {\em J. Algebra}, 17:352--368, 1971.

\bibitem[Lie06]{Lieblich-complexes-on-proper-morphism}
Max Lieblich.
\newblock Moduli of complexes on a proper morphism.
\newblock {\em J. Algebraic Geom.}, 15(1):175--206, 2006.

\bibitem[Lin77]{bertrand-semiperfect-coalgebras}
Bertrand I-peng Lin.
\newblock Semiperfect coalgebras.
\newblock {\em J. Algebra}, 49(2):357--373, 1977.

\bibitem[MO07]{marian-oprea-strange}
Alina Marian and Dragos Oprea.
\newblock The level-rank duality for non-abelian theta functions.
\newblock {\em Invent. Math.}, 168(2):225--247, 2007.

\bibitem[Pop73]{popescu}
Nicolae Popescu.
\newblock {\em Abelian categories with applications to rings and modules}.
\newblock London Mathematical Society Monographs, No. 3. Academic Press, London-New York, 1973.

\bibitem[Qui10]{quillen-K-theory}
Daniel Quillen.
\newblock Higher algebraic {$K$}-theory: {I} [mr0338129].
\newblock In {\em Cohomology of groups and algebraic {$K$}-theory}, volume~12 of {\em Adv. Lect. Math. (ALM)}, pages 413--478. Int. Press, Somerville, MA, 2010.

\bibitem[RS93]{resco-small}
Richard Resco and Lance~W. Small.
\newblock Affine {N}oetherian algebras and extensions of the base field.
\newblock {\em Bull. London Math. Soc.}, 25(6):549--552, 1993.

\bibitem[{Sta}23]{stacks-project}
The {Stacks Project Authors}.
\newblock \textit{Stacks Project}.
\newblock \url{https://stacks.math.columbia.edu}, 2023.

\bibitem[Tak77]{Takeuchi}
Mitsuhiro Takeuchi.
\newblock Morita theorems for categories of comodules.
\newblock {\em J. Fac. Sci. Univ. Tokyo Sect. IA Math.}, 24(3):629--644, 1977.

\bibitem[TV07]{ToenVaquie-objects-in-dg-cats}
Bertrand To\"{e}n and Michel Vaqui\'{e}.
\newblock Moduli of objects in dg-categories.
\newblock {\em Ann. Sci. \'{E}cole Norm. Sup. (4)}, 40(3):387--444, 2007.

\end{thebibliography}
\bibliographystyle{alpha}
\end{document}